\theoremstyle{change}%
\newtheorem{definition}{Definition:}[section]%
\newtheorem{proposition}[definition]{Proposition:}%
\newtheorem{theorem}[definition]{Theorem:}%
\newtheorem{lemma}[definition]{Lemma:}%
{\theorembodyfont{\rmfamily}\newtheorem{remark}[definition]{Remark:}}%
{\theorembodyfont{\rmfamily}}%
\newenvironment{proof}
{{\bf Proof:}}
{\qquad \hspace*{\fill} $\Box$}%
\newcommand{\inner}{\operatorname{int}}%
\newcommand{\rme}{\mathrm{e}}%
\newcommand{\CC}{\mathcal{C}}%
\newcommand{\XC}{\mathcal{X}}%
\newcommand{\DC}{\mathcal{D}}%
\newcommand{\HB}{\mathbb{H}}%
\newcommand{\R}{\mathbb{R}}%
\begin{document}
	
	\title{One-input linear control systems on the homogeneous spaces of the Heisenberg group - The singular case}

\author{Adriano Da Silva\thanks{Supported by Proyecto UTA Mayor Nº 4768-23} \\
		Departamento de Matem\'atica,\\Universidad de Tarapac\'a - Sede Iquique, Chile.
		\and
		Okan Duman\; and\; Ey\"up Kizil \\
		Department of Mathematics \\
		Yildiz Technical University - Istanbul, Turkey.\\
}
\date{\today }
\maketitle

\begin{abstract}
The controllability issue of control-affine systems on smooth manifolds is one of the main problems in the theory, and it is recently known \cite{Jouan} that it might be connected to that of a particular class of systems called linear control systems on (a homogeneous manifold of) a Lie group. Note that it may become very complicated to establish the controllability property of systems evolving on homogeneous spaces of Lie groups whose dynamics are induced by those of systems in the Lie group under consideration. In fact, even in low-dimensional certain homogeneous spaces, this is quite a challenging task, and for this reason, we have classified in \cite{AEO} as a first goal all linear control systems on the homogeneous spaces of the 3-dimensional Heisenberg group $\HB$ through its closed subgroups $L$ and, in particular, the controllability and the control sets have been studied for one of the homogeneous spaces $L\backslash \HB$.

In this paper, we study the controllability and control sets of the induced linear control systems in the homogeneous spaces left. In particular, we focus on the singularity of the induced drift vector fields that results in many cases and subcases to reveal control sets after quite a technical analysis. We give some nice illustrations to better understand what is going on geometrically.
\end{abstract}

{\small {\bf Keywords:} Controllability, control sets, homogeneous spaces} 
	
{\small {\bf Mathematics Subject Classification (2020): 93B05, 93C05, 83C40.}}%

\section{Introduction}

The study of control systems has a rich history, with a particular focus on
linear control systems on $\mathbb{R}^{n}$, which plays a crucial role in
understanding various real-world applications (see, for example, \cite{FW,
GL, KS, Markus}). In essence, a control system can be described as a
dynamical system defined on a state space (a differentiable manifold) with
dynamics governed by a family of differential equations parameterized
through control functions. A natural extension to the matrix Lie groups of a
linear control system on Euclidean space is introduced in \cite{Markus} by
L. Marcus and further extended to arbitrary connected Lie groups in \cite%
{Ayala} by V. Ayala and J. Tirao. In the ensuing years, many authors have
addressed outstanding problems in control theory for this particular class
of systems. These problems include essential questions such as
controllability, observability, and optimization, among others, with notable
contributions in the theory, see \cite{VAAA, VA1, VA2, VA3, VAP, AEO} and
the references therein. One of the key insights to understanding the dynamical
properties of a control system is to study control sets in both topological
and/or algebraic sense. These sets provide the maximum regions in the state
space where approximate controllability occurs.

In this paper, we pick as a state space the well-known Heisenberg Lie group $\mathbb{H}$ of dimension three and consider its one-dimensional closed subgroups $L$ to get two-dimensional homogeneous spaces $L\backslash\mathbb{H}$. This is because, even in the context of low-dimensional groups, the properties of control sets can differ significantly when considering dynamics in Lie groups and their homogeneous spaces (see \cite{VAAA} and \cite{VA1}). It also becomes a very challenging task to analyze the controllability property and to characterize the topological properties of control sets for all these systems. On the other hand, we know from a recent result of P. Jouan in \cite{Jouan} that, in a more general setting, a control-affine system on a manifold can be connected to a linear system on either a Lie group or a homogeneous space. Because of these important connections, we have classified in \cite{AEO} all linear control systems on the homogeneous spaces $L\backslash\mathbb{H}$, and our main purpose here is now to give a very detailed analysis of the controllability properties of these systems. In particular, we show the control behavior of the dynamics evolving on simply connected homogeneous (state) spaces of dimension two for the class of singular systems. By "singular," we mean the singular determinant of the associated derivation matrix for the (induced) drift.

The paper is divided into three main sections. Section \ref{preli} introduces some generalities in the control theory to enhance comprehension of the rest of the manuscript. We also expose explicitly in this chapter the dynamics of $\HB$ and the structure of the corresponding homogeneous spaces equipped with its induced dynamics. In our previous work \cite{AEO}, we have determined 1-dimensional closed subgroups of $\mathbb{H}$ and classified normal and non-normal ones. Our main focus there was to understand the structure of homogeneous spaces obtained by non-normal subgroups, since the treatment of the normal case has already been studied in some recent works (refer to \cite{Ayala, VA1, VAP, VA2, VA3}). Note that the non-normal subgroups of $\mathbb{H}$ were initially determined as (i) $L=\mathbb{R}\mathbf{e}_{1}\times \mathbb{Z}$ and (ii) $L=\mathbb{R}\mathbf{e}_{1}\times \{0\}$, and we have thoroughly studied the structure of all LCSs on the corresponding homogeneous spaces $L\backslash \mathbb{H}\simeq \mathbb{R}\times \mathbb{T}$ and $L\backslash \mathbb{H}\simeq \mathbb{R}\times \mathbb{R}$, respectively. In particular, any induced control system $\Sigma$ on $L\backslash \mathbb{H}$ is equivalent to one of the systems indicated in the diagram (\ref{diagram1}) below. Since the controllability of such a system on the homogeneous space $\mathbb{R}\times \mathbb{T}$ was already studied in \cite{AEO}, our current interest in this article is the controllability of the induced control systems on the other homogeneous space $\mathbb{R}\times \mathbb{R}$. We have to emphasize that the latter case presents a much more intricate and complex structure, which makes this paper both challenging and interesting enough. The controllability issue of these linear control systems in a very detailed manner will be studied in Section \ref{controlsec} by means of a quite technical case-by-case analysis. Finally, in this section, the structures of the control sets for the systems under consideration are explicitly presented for each case and its sub-case whenever they exist.

\section{Preliminaries}\label{preli}
In this section, we introduce very basic knowledge about dynamical systems
for a better understanding of the rest of the paper. We start with control
affine systems on smooth manifolds and quote some results that are necessary
for later references.

\subsection{Control-affine systems}
Let $M$ be a finite-dimensional smooth manifold, and let $\mathbb{R}^{m}$
denote the $m$-dimensional Euclidean space. Given a compact convex subset,
$\Omega \subset \mathbb{R}^{m}$ satisfying $0\in $ int $\Omega $, we mean by
a control-affine system evolving on $M$ the following (parametrized) family
of ordinary differential equations
\begin{equation*}
\Sigma _{M}:\quad \dot{x}(t)=f_{0}(x(t))+\sum_{j=1}^{m}\omega_{j}(t)f_{j}(x(t)),%
\quad \omega\in \mathcal{U},
\end{equation*}%
where $f_{0},f_{1},\ldots ,f_{m}$ are smooth vector fields defined on $M$
and the control parameter $\omega=\left( \omega_{1},\ldots ,\omega_{m}\right) $ belongs to
the set $\mathcal{U}$ of the piecewise constant functions such that $\omega(t)\in
\Omega $.

For an initial state $x\in M$ and $\omega\in \mathcal{U}$, the solution of $%
\Sigma _{M}$ is the unique absolutely continuous curve $\tau\mapsto \varphi (\tau,x,\omega)
$ on $M$ satisfying $\varphi (0,x,\omega)=x$. Associated to $\Sigma _{M}$ we have
for a given $x\in M$, the positive and negative orbits at $x$ as follows:
\begin{eqnarray*}
\mathcal{O}^{+}(x) &=&\{\varphi (\tau,x,\omega):\tau\geq 0,u\in \mathcal{U}\}, \\
\mathcal{O}^{-}(x) &=&\{\varphi (-\tau,x,\omega):\tau\geq 0,u\in \mathcal{U}\}\text{.}
\end{eqnarray*}%
\begin{definition}
We say that a control affine system $\Sigma _{M}$ on $M$ satisfies the \emph{%
Lie algebra rank condition} (abrev. LARC) if $\mathcal{L}(x)=T_{x}M$ for all
$x\in M,$ where $\mathcal{L}$ means the smallest Lie algebra of vector
fields containing $\Sigma _{M}$. The system $\Sigma _{M}$ is said to \emph{%
controllable} if $M=\mathcal{O}^{+}(x)$ for all $x\in M$.
\end{definition}
Note that controllability is quite a strong property for a control system to
satisfy so that an alternative way for controllability is to study the
maximal subsets of $M$ where it occurs. Such subsets are known in the theory
as control sets, which we define next.

\begin{definition}
A nonempty set $\mathcal{C}\subset M$ is a control set of $\Sigma _{M}$ if it is
maximal, w.r.t. set inclusion, with the following properties:

\begin{enumerate}
\item $\forall x\in \mathcal{C}$, there exists a control $\omega\in \mathcal{U}$
such that $\varphi \left( \mathbb{R}^{+},x,\omega\right) \subset \mathcal{C}$;

\item It holds that $\mathcal{C}\subset \mathrm{cl}~\mathcal{O}^{+}(x)$ for
all $x\in \mathcal{C}$.
\end{enumerate}
\end{definition}
It is known that \cite[Proposition 3.2.4]{FW} any subset $\mathcal{C}$ of $M$
with a nonempty interior that is maximal concerning the above property 2 is a
control set. To understand the dynamics of a control system, it is
essential to capture the topological, geometric, and/or algebraic properties
of its control sets. For instance, they allow us to obtain many dynamical
properties of the system, such as equilibrium and recurrence points,
periodic and bounded orbits, etc. Furthermore, the exact controllability is
satisfied in its interior, which means that points can be steered into each
other by a solution of the system in positive time.

Below, we introduce conjugations between control-affine systems. Such
concepts help us simplify their dynamical analysis by changing the
coordinates of the manifold $M$. Let $N$ be another smooth manifold and

\begin{equation*}
\Sigma _{N}:\quad \dot{y}(t)=g_{0}(y(t))+\sum_{j=1}^{m}\omega_{j}(t)g_{j}(y(t)),%
\quad \omega\in \mathcal{U},
\end{equation*}%
a control-affine system on $N$.

\begin{definition}\label{defi:conjugado}
If $\psi :M\rightarrow N$ is a smooth map, we say that a vector field $X$ on
$M$ and a vector field $Y$ on $N$ are $\psi $-conjugated (sometimes said $%
\psi $-related) if $d\psi \circ X=Y\circ \psi $. In particular, we say that $%
\Sigma _{M}$ and $\Sigma _{N}$ are $\psi $-conjugated if $d\psi \circ
f_{j}=g_{j}\circ \psi $ for each $j\in \{0,1,2,\ldots ,m\}$. In case $\psi $
is a diffeomorphism, $\Sigma _{M}$ and $\Sigma _{N}$ are called equivalent
systems.
\end{definition}

Several properties of equivalent systems are preserved, such as
controllability, topological properties of positive and negative orbits, and
control sets. In this direction, the next result
relates control sets of conjugated systems. Since its proof is standard, we will omit it.

\begin{proposition}
\label{conjugation}
    Let $\Sigma_M$ and $\Sigma_N$ be $\psi$-conjugated systems. It holds:
\begin{enumerate}
    \item[1.] If $\CC_M$ is a control set of $\Sigma_M$, there exists a control set $\CC_N$ of $\Sigma_N$ such that $\psi(\CC_M)\subset \CC_N$;
    \item[2.] If for some $y_0\in \inner \CC_N$ it holds that $\psi^{-1}(y_0)\subset\inner \CC_M$, then $\CC_M=\psi^{-1}(\CC_N)$.
\end{enumerate}
\end{proposition}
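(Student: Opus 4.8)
The plan is to work directly from the two defining properties of a control set and exploit the basic fact that $\psi$-conjugation intertwines trajectories: if $\Sigma_M$ and $\Sigma_N$ are $\psi$-conjugated, then for every initial point $x\in M$ and every control $\omega\in\UC$ one has $\psi(\varphi_M(\tau,x,\omega))=\varphi_N(\tau,\psi(x),\omega)$ for all $\tau$ in the domain of the $M$-trajectory. This is the standard consequence of $d\psi\circ f_j=g_j\circ\psi$ for all $j$ together with uniqueness of solutions of ODEs, and it immediately gives $\psi(\OC_M^+(x))\subset\OC_N^+(\psi(x))$, hence also $\psi(\cl\,\OC_M^+(x))\subset\cl\,\OC_N^+(\psi(x))$ by continuity of $\psi$.

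For part 1, I would argue as follows. Let $\CC_M$ be a control set of $\Sigma_M$. Pick any $x_0\in\CC_M$; then $\psi(\CC_M)\subset\cl\,\OC_M^+(x_0)$ implies, via the inclusion above, $\psi(\CC_M)\subset\cl\,\OC_N^+(\psi(x_0))$. More generally, for arbitrary $x\in\CC_M$ one gets $\psi(\CC_M)\subset\cl\,\OC_N^+(\psi(x))$, so every point of $\psi(\CC_M)$ lies in the closure of the positive orbit of every other point of $\psi(\CC_M)$. Likewise, property 1 for $\CC_M$ produces for each $x$ a control $\omega$ with $\varphi_M(\R^+,x,\omega)\subset\CC_M$, and applying $\psi$ gives $\varphi_N(\R^+,\psi(x),\omega)\subset\psi(\CC_M)$. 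Thus $\psi(\CC_M)$ satisfies the two control-set axioms, except possibly maximality; since a control set is by definition a \emph{maximal} such set, there is a control set $\CC_N\supset\psi(\CC_M)$ (one may also invoke Zorn's lemma to produce a maximal set containing $\psi(\CC_M)$, or simply note $\psi(\CC_M)$ is contained in the control set through any of its points). This yields the desired $\psi(\CC_M)\subset\CC_N$.

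For part 2, suppose $y_0\in\inner\CC_N$ with $\psi^{-1}(y_0)\subset\inner\CC_M$. First, $\psi^{-1}(\CC_N)$ is nonempty (it contains $\psi^{-1}(y_0)$, which is nonempty since $y_0\in\inner\CC_N\subset\psi(\CC_M)$ would need checking — more carefully, use that $\CC_M$ maps into some control set and a cardinality/surjectivity-free argument: from part 1, $\psi(\CC_M)$ lies in a control set $\CC_N'$, and one shows $\CC_N'=\CC_N$ because they share the interior point $y_0$). The heart of the argument is: (a) $\psi^{-1}(\CC_N)$ satisfies control-set property 2, because any $x\in\psi^{-1}(\CC_N)$ can be steered approximately to $\psi^{-1}(y_0)$: indeed $\psi(x)\in\CC_N\subset\cl\,\OC_N^+(\psi(x))$ and exact controllability holds in $\inner\CC_N$, so one reaches a neighborhood of $y_0$, lifts this through $\psi$ near a point of $\psi^{-1}(y_0)\subset\inner\CC_M$, and then uses that $\CC_M$ is a control set to move within $\CC_M$; chaining these, $x\in\cl\,\OC_M^+(x')$ for suitable $x'$, and a symmetric argument gives the reverse closure, so $\psi^{-1}(\CC_N)$ is contained in the closure of each of its points' orbits. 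Then by \cite[Proposition 3.2.4]{FW}, since $\psi^{-1}(\CC_N)$ has nonempty interior (it contains $\psi^{-1}(y_0)\subset\inner\CC_M$, an open set) and is maximal w.r.t.\ property 2, it is a control set; and it contains $\CC_M$ by the same chaining argument, forcing $\CC_M=\psi^{-1}(\CC_N)$ by maximality of $\CC_M$.

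The main obstacle is the bookkeeping in part 2: one must carefully use \emph{exact} controllability in the interior of $\CC_N$ (a property quoted in the text as holding in control-set interiors) to lift approximate reachability statements through $\psi$ without assuming $\psi$ is injective or surjective, and then patch the lifted trajectory segments together inside $\CC_M$. Keeping track of which closures and interiors are needed, and invoking \cite[Proposition 3.2.4]{FW} at exactly the right moment, is the delicate point; the trajectory-intertwining identity and the topological facts about control sets do all the real work once that is set up. As the authors note, the proof is standard, so in the paper it is omitted; the above is how I would fill it in.
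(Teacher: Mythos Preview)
The paper omits the proof of this proposition as standard, so there is no detailed argument to compare against. Your treatment of part~1 is correct and is exactly the expected one: the intertwining identity $\psi\circ\varphi_M=\varphi_N\circ(\psi\times\id)$ pushes both defining properties of $\CC_M$ forward to $\psi(\CC_M)$, and one then enlarges to a maximal such set.

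For part~2 your overall plan---show that $\psi^{-1}(\CC_N)$ satisfies the two control-set axioms and contains $\CC_M$, then appeal to maximality (or \cite[Proposition~3.2.4]{FW})---is the right one, but the phrase ``a symmetric argument gives the reverse closure'' conceals the only genuinely nontrivial step, and as written it does not go through. You correctly argue that from any $x\in\psi^{-1}(\CC_N)$ one reaches $\psi^{-1}(y_0)\subset\inner\CC_M$ exactly: since $y_0\in\inner\CC_N\subset\OC_N^+(\psi(x))$, lifting the corresponding $N$-trajectory from $x$ lands in $\psi^{-1}(y_0)$. The reverse direction---reaching an arbitrary $x'\in\psi^{-1}(\CC_N)$ from some point of $\psi^{-1}(y_0)$---is \emph{not} symmetric: lifting an $N$-trajectory from $y_0$ to $\psi(x')$ starting at a chosen $z\in\psi^{-1}(y_0)$ lands in the fibre $\psi^{-1}(\psi(x'))$, but a priori not at $x'$ itself. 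The missing observation is that, for fixed $(\tau,\omega)$, the time-$\tau$ flow $\varphi_M(\tau,\cdot,\omega)$ is a diffeomorphism of $M$ which, by the intertwining relation, maps the fibre $\psi^{-1}(y_0)$ \emph{onto} the fibre $\psi^{-1}\bigl(\varphi_N(\tau,y_0,\omega)\bigr)$. Thus, when $\psi(x')\in\inner\CC_N$ (so that $\psi(x')=\varphi_N(\tau,y_0,\omega)$ for some $(\tau,\omega)$), there exists $z'\in\psi^{-1}(y_0)\subset\inner\CC_M$ with $\varphi_M(\tau,z',\omega)=x'$; together with exact controllability in $\inner\CC_M$ this places $x'$ on a positive-time trajectory from $\inner\CC_M$ back to $\inner\CC_M$, hence $x'\in\CC_M$. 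This yields $\psi^{-1}(\inner\CC_N)\subset\CC_M$, and boundary points are then handled by a closure argument (straightforward in the paper's applications, where $\psi$ is a submersion and therefore open). Without this fibre-to-fibre bijectivity step, your ``symmetric'' half of the argument does not close.
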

Since we are going to examine control affine systems on the Heisenberg group
(and consequently, its associated homogeneous space), it is convenient to
expose fundamental definitions and facts concerning Lie groups and their
associated Lie algebras.

\begin{definition}
A vector field $\mathcal{X}$ on a connected Lie group $G$ is linear if its
flow $\{\varphi _{t}\}_{t\in \mathbb{R}}$ is a 1-parameter subgroup of $%
Aut(G)$, the group of all automorphisms of $G$.
\end{definition}

It is well known that a linear vector field on a connected Lie group $G$ is
complete, and one can always associate with such a vector field a derivation $%
\mathcal{D}=-ad(\mathcal{X})$ of the corresponding Lie algebra $\mathfrak{g}$
of $G$. Recall that by a derivation $\mathcal{D}$ of a Lie algebra $%
\mathfrak{g}$ we mean a linear map on $\mathfrak{g}$ such that $\mathcal{D}%
[X,Y]=[\mathcal{D}X,Y]+[X,\mathcal{D}Y]$ for every $X,Y\in \mathfrak{g}$.
Therefore, it holds that $(d\varphi _{t})_{e}=e^{t\mathcal{D}},$ $\forall
t\in \mathbb{R}$, where $\varphi _{t}$ stands for the flow. In particular,
from the following diagram
\[ \begin{tikzcd}[row sep = large, column sep = large]
\mathfrak{g}~~ \arrow{r}{(d\varphi _{t})_{e}} \arrow[swap]{d}{\exp{}} & ~~ \mathfrak{g} \arrow{d}{\exp{}} \\%
G~~\arrow[ur, phantom, "\scalebox{1.5}{$\circlearrowleft$}" description]  \arrow{r}{\varphi _{t}}& ~~ G
\end{tikzcd}
\]
we have that $\varphi _{t}(\exp Y)=\exp (e^{t\DC}Y)$
for every $Y\in \mathfrak{g}$ and $t\in \mathbb{R}$.
\begin{remark}
Any linear vector field defines a derivation, but the converse is true only
for simply connected Lie groups. Furthermore, in the case where $G$ is a
connected and simply connected nilpotent Lie group, the exponential map
serves as a diffeomorphism. This implies that, especially for a given
derivation, it becomes possible to explicitly compute the drift $\mathcal{X}$
through the above diagram using the logarithmic map $\log (g)=Y$ where $g\in
G$.
\end{remark}

\subsection{LCSs on the Heisenberg group $\HB$}
In this section, we mention some results on the LCSs, both on the Heisenberg
group and on the homogeneous spaces generated by its one-dimensional
subgroups. Rather than defining the Heisenberg group $\mathbb{H}$ as the set
of upper triangular matrices with only 1s in the main diagonal, we choose to
interpret it as the Cartesian product $\mathbb{R}^{2}\times \mathbb{R}$.
This perspective enables us to express key elements in a more effective way,
such as group multiplication, invariant and linear vector fields, and their
Lie brackets. The Heisenberg group $\mathbb{H}=\mathbb{R}^{2}\times \mathbb{R%
}$ has a product given by
\begin{equation*}
(\mathbf{v}_{1},z_{1})\ast (\mathbf{v}_{2},z_{2})=\left(\mathbf{v}_{1}+%
\mathbf{v}_{2},z_{1}+z_{2}+\frac{1}{2}\left\langle \mathbf{v}_{1}, \theta%
\mathbf{v}_{2}\right\rangle\right),
\end{equation*}%
where $\left\langle \cdot ,\cdot \right\rangle $ stands for the standard
inner product in $\mathbb{R}^{2}$ and $\theta$ stands for the
counter-clockwise rotation of $\frac{\pi}{2}$-degrees, and $\mathbf{{v}_{1},
\mathbf{v}_{2}} \in \mathbb{R}^2, z_1, z_2\in \mathbb{R}.$

The Lie algebra $\mathfrak{h}$ $=\mathbb{R}^{2}\times \mathbb{R}$ of $%
\mathbb{H}$ is equipped with the Lie bracket
\begin{equation*}
\lbrack (\mathbf{\zeta }_{1},\alpha _{1}),(\mathbf{\zeta }_{2},\alpha
_{2})]=(\mathbf{0},\left\langle \mathbf{\zeta }_{1}, \theta\mathbf{\zeta }%
_{2}\right\rangle ),
\end{equation*}
where ${\zeta_{1}},{\zeta_{2}} \in {\mathbb{R}^2}$ and ${\alpha_1},{\alpha_2}
\in \mathbb{R}$.
Following our previous setup, we state in the next elementary proposition the explicit expression of a
typical derivation $\mathcal{D}$ of $\mathfrak{h}$ and an automorphism of $%
\mathbb{H}$ both in matrix form w.r.t. the standard basis. Recall that $%
\mathrm{Gl}(2)$ denotes the Lie group of $2\times 2$ invertible matrices
with Lie algebra $\mathfrak{gl}(2)$.
\begin{proposition}
Let $P\in \mathrm{Gl}(2)$, $A\in \mathfrak{gl}(2)$, and $\eta \in \mathbb{R}^{2}$. Elements of the Lie algebra $\mathrm{Der}(\mathfrak{h})$ of derivations of $\mathfrak{h}$ and the Lie group $\mathrm{Aut}(\HB)$ of automorphisms of $\HB$ in matrix form are given by:
   \[
  \mathcal{D}= \left(\begin{array}{cc}
 		A & \mathbf{0}\\ \eta^{\top} & \mathrm{tr}A
 	\end{array}\right)\in \mathrm{Der}(\mathfrak{h})
 \qquad  \text{and}\qquad \quad 
  \Theta= \left( 
 \begin{array}{cc}
 P & \mathbf{0} \\ 
 \eta^{\top}  & \det P
 \end{array}\right)\in\mathrm{Aut}(\HB).
 \]
\end{proposition}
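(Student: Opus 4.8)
The plan is to handle both cases in tandem, exploiting that $\fh$ is $2$-step nilpotent with one-dimensional center $\fz=\{\mathbf 0\}\times\R$, which here also coincides with the derived algebra $[\fh,\fh]$. The first step is the standard observation that any derivation, and any Lie-algebra automorphism, sends $\fz$ into itself: if $Z\in\fz$ then $0=\mathcal D[Z,X]=[\mathcal DZ,X]+[Z,\mathcal DX]=[\mathcal DZ,X]$ for every $X$, so $\mathcal DZ\in\fz$, and similarly for automorphisms. Writing linear maps of $\fh=\R^2\times\R$ as block matrices with respect to $\R^2\oplus\R\mathbf e_3$, this already forces the lower block-triangular shape $\left(\begin{smallmatrix}A&\mathbf 0\\ \eta^{\top}& d\end{smallmatrix}\right)$, and it remains only to determine the scalar $d$ in the bottom-right corner.

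Next I would write out the defining identity. For a derivation, substituting $X=(\zeta_1,\alpha_1)$, $Y=(\zeta_2,\alpha_2)$ in $\mathcal D[X,Y]=[\mathcal DX,Y]+[X,\mathcal DY]$ and using $[(\zeta_1,\alpha_1),(\zeta_2,\alpha_2)]=(\mathbf 0,\langle\zeta_1,\theta\zeta_2\rangle)$ collapses everything to
\[
d\,\langle\zeta_1,\theta\zeta_2\rangle=\langle A\zeta_1,\theta\zeta_2\rangle+\langle\zeta_1,\theta A\zeta_2\rangle\qquad\text{for all }\zeta_1,\zeta_2\in\R^2,
\]
i.e. $A^{\top}\theta+\theta A=d\,\theta$. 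In the same way, for a candidate automorphism $\left(\begin{smallmatrix}P&\mathbf 0\\ \eta^{\top}& q\end{smallmatrix}\right)$ (with $P$ invertible and $q\neq 0$, as required for bijectivity) the homomorphism condition $\Phi[X,Y]=[\Phi X,\Phi Y]$ reduces to $\langle P\zeta_1,\theta P\zeta_2\rangle=q\,\langle\zeta_1,\theta\zeta_2\rangle$, i.e. $P^{\top}\theta P=q\,\theta$.

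The one computational ingredient is the $2\times 2$ identity $\theta^{-1}A^{\top}\theta=(\tr A)\,I-A$, which is just the adjugate relation $A+\operatorname{adj}A=(\tr A)I$ combined with $\operatorname{adj}A=\theta^{-1}A^{\top}\theta$ (both a one-line check, since $\theta$ is rotation by $\pi/2$ and $\theta^{-1}=\theta^{\top}$). Left-multiplying $A^{\top}\theta+\theta A=d\,\theta$ by $\theta^{-1}$ then gives $(\tr A)I-A+A=dI$, hence $d=\tr A$; conversely any matrix of the claimed form obviously satisfies the derivation identity, so $\mathrm{Der}(\fh)$ is exactly as stated. For the automorphism equation I would instead read $\langle\zeta_1,\theta\zeta_2\rangle$ as (up to sign) the area form on $\R^2$, so that $\langle P\zeta_1,\theta P\zeta_2\rangle=(\det P)\langle\zeta_1,\theta\zeta_2\rangle$ identically; comparison with $P^{\top}\theta P=q\,\theta$ forces $q=\det P$ (in particular $q\neq 0$ is automatic once $P\in\mathrm{Gl}(2)$). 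To pass from Lie-algebra automorphisms to group automorphisms I would invoke that $\HB$ is connected, simply connected and nilpotent, so $\Theta\mapsto(d\Theta)_e$ is an isomorphism $\mathrm{Aut}(\HB)\xrightarrow{\ \sim\ }\mathrm{Aut}(\fh)$ (and, via the exponential correspondence recalled above, in the present coordinates $\Theta$ acts as the linear map $(d\Theta)_e$); a direct substitution into the product law of $\HB$ then confirms that every matrix $\left(\begin{smallmatrix}P&\mathbf 0\\ \eta^{\top}&\det P\end{smallmatrix}\right)$ really defines an automorphism of $\HB$, closing both inclusions.

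The only genuine obstacle is the sign/trace bookkeeping in the third step: a crude determinant count of $P^{\top}\theta P=q\,\theta$ yields only $q=\pm\det P$, so one must argue — via the area-form interpretation of $\theta$ — that the sign is $+$; and, dually, the content of the proposition is precisely that the adjugate identity pins $d$ down to $\tr A$ instead of leaving it free. Everything else is routine manipulation with $3\times 3$ block matrices.
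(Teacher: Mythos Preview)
Your argument is correct. Note, however, that the paper states this proposition as an ``elementary'' fact and does not supply a proof at all; there is therefore no paper proof to compare against, and your write-up simply fills in the details the authors chose to omit. The structure you use --- invariance of the center to force block-triangularity, then the $2\times 2$ adjugate identity $\theta^{-1}A^{\top}\theta=(\tr A)I-A$ to pin down the bottom-right scalar as $\tr A$, and the area-form identity $P^{\top}\theta P=(\det P)\,\theta$ for the automorphism case --- is the standard route and is cleanly executed. Your remark at the end about the sign ambiguity (a pure determinant count gives only $q^2=(\det P)^2$) is well taken; the bilinear/area-form reading of $\langle\cdot,\theta\cdot\rangle$ is indeed what fixes the sign.
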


Given the fact that the Lie algebra $\mathfrak{h}$ of $\mathbb{H}$ is
regarded as the set of left-invariant vector fields on $\mathbb{H}$, we
present a usual expression for such a vector field that is suitable for our
current context. Additionally, given a derivation $\mathcal{D}$ of $%
\mathfrak{h}$ as described above, one can easily associate $\mathcal{D}$
with the linear vector field $\mathcal{X}$ on $\mathbb{H}$ using the
relation $[B, \mathcal{X}] = \mathcal{D} B$ for all $B \in \mathfrak{h}$.
Therefore, when we refer to a one-input linear control system on $\mathbb{H}$%
, we are describing a system in the following form:
\begin{equation*}
\Sigma _{\mathbb{H}}:\quad \dot{(\mathbf{v},z)}=\mathcal{X}(\mathbf{v}%
,z)+\omega B(\mathbf{v},z),
\end{equation*}
where $\omega \in \Omega $, $\mathcal{X}(\mathbf{v}%
,z)=(A\mathbf{v},\langle \eta ,\mathbf{v%
}\rangle +z~\mathrm{tr}A)$ is a linear vector field, and $B(\mathbf{v}%
,z)=(\zeta ,\alpha +%
\frac{1}{2}\langle \mathbf{v}, \theta\zeta \rangle)$ is a left-invariant
vector field with $(\mathbf{v},z)\in \mathbb{H}$ and $(\zeta ,\alpha)\in%
\mathfrak{h}$. Consequently, the flow $\varphi _{\tau}$ induced by $\mathcal{X}$
is as follows:
\begin{equation*}
\varphi _{\tau}(\mathbf{v},z)=\left( e^{\tau A}\mathbf{v},\left\langle \mathrm{e}%
^{\tau\cdot\mathrm{tr}A}\mathbf{\Lambda }_{\tau}^{(A-\mathrm{tr}A\cdot I_2)}\eta ,%
\mathbf{v}\right\rangle +z\mathrm{e}^{\tau\cdot\mathrm{tr}A}\right),
\end{equation*}
where $I_2$ stands for the $2\times 2$ identity matrix and $\mathbf{\Lambda }%
^{B}:\mathbb{R}\times \mathbb{R}^{2}$ is an operator defined by $\mathbf{%
\Lambda }_{\tau}^{B}\eta =\int_{0}^{\tau}e^{s B^{\top}}\eta ds$ for all $B\in
\mathfrak{gl}(2)$.
\begin{remark}
   Given that a linear vector field on $\HB$ is uniquely determined by $\eta \in \mathbb{R}^2$ and $A \in \mathfrak{gl}(2)$, we will commonly use the notation $\XC = (\eta, A)$ to denote such a vector.
\end{remark}

\subsection{LCSs on the homogeneous spaces of $\HB$}

In this section, we recall some results and useful facts about the homogeneous spaces of the Heisenberg group $\HB $.
\begin{definition}
A one-input LCS on the homogeneous space $L\backslash \mathbb{H}$ is the
following control-affine system:
\begin{equation}
\Sigma _{L\backslash \mathbb{H}}:\quad \dot{p}=f_{0}(p)+\omega f_{1}(p),
\label{Eq:contsyshomogeneous}
\end{equation}%
with $\omega\in \Omega ,p\in L\backslash \mathbb{H}$ and $f_{0}$ and $f_{1}$ are
vector fields on $L\backslash \mathbb{H}$ satisfying
\begin{equation*}
d\pi \circ \mathcal{X}=f_{0}\circ \pi \hspace{0.5cm}\text{ and }\hspace{0.5cm%
}d\pi \circ B=f_{1}\circ \pi,
\end{equation*}%
where $\mathcal{X}$ is a linear vector field and $B$ is a left-invariant vector
field, with $\pi :\mathbb{H}\rightarrow L\backslash \mathbb{H}$ the standard
canonical projection.
\end{definition}

Based on the definition in Definition \ref{defi:conjugado}, it follows that
a LCS on the homogeneous space $L\backslash \mathbb{H}$ is $\pi $-conjugated
to an LCS on $\mathbb{H}$. Furthermore, as established in \cite[Proposition 4%
]{Jouan}, the vector field $d\pi \circ \mathcal{X}$ is well-defined on $%
L\backslash \mathbb{H}$ if and only if $L$ is invariant under the flow $%
\varphi _{t}$ of $\mathcal{X}$ indicating $\varphi _{t}(L)=L$ for every $%
t\in \mathbb{R}$. As can be seen, one needs to find the possible $\varphi
_{t}$-invariant closed subgroups of $\mathbb{H}$ in order to classify all
possible LCSs on the homogeneous spaces of $\mathbb{H}$.

\begin{remark}
Consider $\Sigma _{L\backslash \mathbb{H}}$, a LCS on $L\backslash \mathbb{H}
$ defined as in (\ref{Eq:contsyshomogeneous}). In this context, the vector
fields $\mathcal{X}$ and $B$ are $\pi $-conjugated with the vector fields $%
f_{0}$ and $f_{1}$, respectively. Suppose that there is a $\psi \in \mathrm{
Aut}(\mathbb{H})$ such that $\widehat{L}=\psi (L)$ is one of the invariant
subgroups w.r.t. the flow of the linear vector field $\mathcal{X}$.
Furthermore, let $\widehat{\mathcal{X}}$ and $\hat{B}$ represent the vector
fields satisfying
\begin{equation*}
d\psi \circ \widehat{\mathcal{X}}=\mathcal{X}\circ \psi \hspace{0.5cm}%
\mbox{
and }\hspace{0.5cm}d\psi \circ \widehat{B}=B\circ \psi \text{.}
\end{equation*}%
The invariance of $L$ under the flow of $\mathcal{X}$ implies that $\widehat{%
L}$ is similarly invariant under the flow of $\widehat{\mathcal{X}}$.
Consequently, we obtain well-defined vector fields $\widehat{f}_{0}$ and $%
\widehat{f}_{1}$ on $\widehat{L}\backslash \mathbb{H}$ determined in the
following way
\begin{equation*}
\widehat{f_{0}}\circ \widehat{\pi }=d\widehat{\pi }\circ \widehat{\mathcal{X}%
}\hspace{0.5cm}\mbox{ and }\hspace{0.5cm}\widehat{f_{1}}\circ \widehat{\pi }%
=d\widehat{\pi }\circ \widehat{B},
\end{equation*}%
where $\widehat{\pi }:\mathbb{H}\rightarrow \widehat{L}\backslash \mathbb{H}$
is the canonical projection. Since the map $\widehat{\psi }:L\backslash
\mathbb{H}\rightarrow \widehat{L}\backslash \mathbb{H}$ defined by the
relation $\widehat{\psi }\circ \pi =\widehat{\pi }\circ \psi $ is a
diffeomorphism, the fact that
\begin{equation*}
d\widehat{\psi }\circ f_{0}=\widehat{f}_{0}\circ \widehat{\psi }\hspace{0.5cm%
}\mbox{ and }\hspace{0.5cm}d\widehat{\psi }\circ f_{1}=\widehat{f}_{1}\circ
\widehat{\psi },
\end{equation*}%
give us that $\Sigma _{L\backslash \mathbb{H}}$ is equivalent to the LCS on $%
\widehat{L}\backslash \mathbb{H}$ given by
\begin{equation*}
\Sigma _{\widehat{L}\backslash \mathbb{H}}:\quad \dot{q}=\widehat{f}_{0}(q)+u%
\widehat{f}_{1}(q)\text{.}
\end{equation*}
\end{remark}

\begin{proposition}\label{MainPro1}\cite[Proposition 2]{AEO}
Up to isomorphisms, any one-dimensional closed subgroup $L\subset \HB$ is given by
\begin{enumerate}
\item $L=\mathbb{Z}^k\times\mathbb{R}$ for $k=0, 1, 2$  or 
\item $L=\mathbb{R}\mathbf{e}_1\times\mathbb{Z} p$ for $p=0, 1$.
\end{enumerate}
\end{proposition}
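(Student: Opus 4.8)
The plan is to classify $L$ through its identity component $L_{0}$ and then to recover $L$ from it. Since $L$ is closed it is an embedded Lie subgroup, so $L_{0}$ is a connected one-dimensional subgroup; as $\HB$ is simply connected and nilpotent, $\exp$ is a diffeomorphism (one checks it is the identity map in the coordinates fixed above), so $L_{0}=\R X$ for some nonzero $X=(\zeta,\alpha)\in\fh$. Two cases arise. If $\zeta=\mathbf{0}$, then $X$ spans the centre and $L_{0}=\fz=\{\mathbf{0}\}\times\R$. If $\zeta\neq\mathbf{0}$, I would apply the automorphism $\Theta=\left(\begin{smallmatrix}P & \mathbf{0}\\ \eta^{\top} & \det P\end{smallmatrix}\right)$ with $P\in\mathrm{Gl}(2)$ chosen so that $P\zeta=\mathbf{e}_{1}$ and then $\eta$ chosen so that $\langle\eta,\zeta\rangle=-\alpha\det P$; this sends $X$ to $(\mathbf{e}_{1},0)$, hence $L_{0}$ to $\R\mathbf{e}_{1}\times\{0\}$. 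Thus, up to an automorphism of $\HB$, $L_{0}$ is one of these two subgroups, and I would treat each case separately.

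\emph{Central case $L_{0}=\fz$.} Here $L$ contains $\ker\pi$ for the projection $\pi\colon\HB\to\HB/\fz\cong\R^{2}$, so $L$ is $\pi$-saturated, i.e.\ $L=\pi^{-1}(\pi(L))$. Since $\ker\pi$ is connected of the same dimension as $L$, it coincides with $L_{0}$, so $\pi(L)\cong L/L_{0}$ is a discrete subgroup of $\R^{2}$, that is, a lattice of rank $k\in\{0,1,2\}$. Automorphisms of $\HB$ act on $\HB/\fz$ exactly through the block $P$, i.e.\ through all of $\mathrm{Gl}(2)$, which is transitive on rank-$k$ lattices; hence, up to an automorphism, $\pi(L)=\Z^{k}$ and $L=\pi^{-1}(\Z^{k})=\Z^{k}\times\R$. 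Being the preimage of a subgroup under a continuous homomorphism, this set is automatically a closed subgroup, and it is one-dimensional since $\Z^{k}$ is discrete; this is case (1).

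\emph{Non-central case $L_{0}=\R\mathbf{e}_{1}\times\{0\}$.} As the identity component, $L_{0}$ is normal in $L$, so $L\subseteq N_{\HB}(L_{0})$. The conjugation identity $(\mathbf{w},c)\ast(t\mathbf{e}_{1},0)\ast(\mathbf{w},c)^{-1}=(t\mathbf{e}_{1},\,t\,w_{2})$, with $\mathbf{w}=(w_{1},w_{2})$, shows that $N_{\HB}(L_{0})=\{(\mathbf{w},c):w_{2}=0\}=\R\mathbf{e}_{1}\times\R$, which under the induced product is the additive group $\R^{2}$. Therefore $L$ is a closed one-dimensional subgroup of $\R^{2}$ containing the coordinate line $\R\mathbf{e}_{1}\times\{0\}$; passing to the quotient by that line and using that the only closed subgroups of $\R$ are $\{0\}$, $\R$ and $\Z\beta$ for $\beta>0$ forces $L=\R\mathbf{e}_{1}\times\{0\}$ or $L=\R\mathbf{e}_{1}\times\Z\beta$. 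In the latter case the automorphism with $P=\operatorname{diag}(1,\beta^{-1})$ and $\eta=\mathbf{0}$ rescales the $z$-axis and carries $L$ to $\R\mathbf{e}_{1}\times\Z$; composing it with the automorphism used to normalise $L_{0}$ gives a single element of $\mathrm{Aut}(\HB)$ taking $L$ to $\R\mathbf{e}_{1}\times\Z p$ with $p\in\{0,1\}$, which is case (2).

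\emph{Main obstacle.} The closed-subgroup reduction and the checks that the listed sets are one-dimensional subgroups are routine; the delicate part is the non-central case, above all the conjugation computation identifying $N_{\HB}(L_{0})$ and the need to verify the list is exhaustive. A secondary point is the reading of the normal forms: $\Z^{k}\times\R$ and $\R\mathbf{e}_{1}\times\Z p$ denote the indicated \emph{subsets} of $\HB=\R^{2}\times\R$ --- for $k=2$ the resulting subgroup is not the abstract direct product --- so "up to isomorphisms" is to be understood as "up to automorphisms of $\HB$"; the two families are then distinct because an automorphism preserves the centre, while within each family the members are told apart by their group of connected components.
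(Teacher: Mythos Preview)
The paper does not prove this proposition; it merely quotes it from the authors' earlier work \cite{AEO}, so there is no in-paper argument to compare against. Your proof is correct and self-contained: the reduction via the identity component $L_0$, the normalisation of $L_0$ by an automorphism, the lattice classification in the central case via the $\mathrm{Gl}(2)$-action on $\HB/\fz$, and the normaliser computation $N_{\HB}(\R\mathbf{e}_1\times\{0\})=\R\mathbf{e}_1\times\R$ in the non-central case are all sound. Your closing remarks---that the normal forms denote subsets of $\HB$ (so that $\Z^2\times\R$ carries the restricted Heisenberg product, not the abelian one), that ``up to isomorphisms'' must mean ``up to $\mathrm{Aut}(\HB)$'', and that the families are separated by whether $L_0$ is central together with the component group $L/L_0$---are apt and in fact clarify points the statement leaves implicit.
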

We remark that, up to isomorphisms, $\mathbb{Z}^{k}\times \mathbb{R}$ are the normal subgroups of $\HB$ for $k=0,1,2$. We will exclude the consideration of the normal subgroups of $\HB$. Doing so is intentional, as the corresponding homogeneous spaces become Lie groups, and the study of LCSs on such spaces has already been thoroughly investigated in a series of papers. For a detailed exposition, refer to \cite{Ayala, VA1, VAP, VA2, VA3}.

In our earlier paper \cite{AEO}, we have classified both normal and
non-normal closed subgroups of dimension one of $\mathbb{H}$ and the main
focus was given to the structure of homogeneous spaces through non-normal
ones since the other case was already treated in the literature in a series
of recent papers. These non-normal subgroups of $\mathbb{H}$ are (i) $%
L=\mathbb{R}\mathbf{e}_{1}\times \mathbb{Z}$ and (ii) $L=\mathbb{R}\mathbf{e}%
_{1}\times \{0\}$ and the structure of all LCSs on the corresponding homogeneous spaces $
L\backslash \mathbb{H}\simeq
\mathbb{R}\times \mathbb{T}$ and $L\backslash \mathbb{H}\simeq \mathbb{R}\times \mathbb{R}$ have been
exposed. Note that any induced control system $\Sigma$ on $ L\backslash \mathbb{H} $ is equivalent to one of the systems in the
diagram (\ref{diagram1}) below. We remember that
controllability of such a system on the homogeneous space $\mathbb{R}\times \mathbb{T}$ has already been studied in \cite{AEO} and the
purpose of this article is to consider the controllability of the induced
control systems on the other homogeneous space $\mathbb{R}\times \mathbb{R}$%
. The latter case that we are going to deal with here has much more complex
structures and appears to be quite complicated.
\begin{equation}\label{diagram1}
\begin{tikzcd}[row sep=large,column sep=huge]
 & ~\Sigma~ \arrow[dl,"L\backslash \HB\simeq\R\times\R"'] \arrow[dr,"L
\backslash  \HB\simeq \mathbb{R}\times \mathbb{T}"] &  \\
 \Sigma_{1,0} & & \Sigma_{1,1}
\end{tikzcd}
\end{equation}

To recognize and distinguish better the system(s) that are already considered
before (i.e., $\Sigma_{1,1}$) and the one(s) we take into account now (i.e., $\Sigma_{1,0}$) we find it convenient to quote
from our earlier paper, the following proposition without proof:

\begin{proposition}\label{MainPro}\cite[Proposition 3]{AEO}
    	Let $\XC=(\eta, A)$ be a linear vector field on $\HB$ with associated flow $\{\varphi_t\}_{t\in\mathbb{R}}$. It holds that
\begin{enumerate}
    \item $\mathbb{R} \mathbf{e}_{1}\times\{0\}$ is $\varphi_t$-invariant if and only if 
    		$$A \mathbf{e}_{1}=\lambda \mathbf{e}_{1}, \;\;A\mathbf{e}_{2}=\beta \mathbf{e}_{2}+\alpha \mathbf{e}_{1}\;\;\mbox{ and }\;\;\eta\in \mathbb{R} \mathbf{e}_{2}, \;\mbox{ with }\;\alpha=0 \;\mbox{ if }\;\eta\neq 0;$$
     \item $\mathbb{R} \mathbf{e}_{1}\times\mathbb{Z}$ is $\varphi_t$-invariant if and only if 
    		$$A \mathbf{e}_{1}=\lambda \mathbf{e}_{1}, \;\;A\mathbf{e}_{2}=-\lambda \mathbf{e}_{2}+\alpha \mathbf{e}_{1}\;\;\mbox{ and }\;\;\eta\in\mathbb{R}\mathbf{e}_{2}, \;\mbox{ with }\;\alpha=0 \;\mbox{ if }\;\eta\neq 0.$$ 
    		
    		\end{enumerate} 
      \end{proposition}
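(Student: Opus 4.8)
The plan is to characterize the invariance condition $\varphi_t(L)=L$ directly from the explicit formula for the flow $\varphi_\tau(\mathbf{v},z)=\left(e^{\tau A}\mathbf{v},\left\langle e^{\tau\cdot\mathrm{tr}A}\mathbf{\Lambda}_\tau^{(A-\mathrm{tr}A\cdot I_2)}\eta,\mathbf{v}\right\rangle+z\,e^{\tau\cdot\mathrm{tr}A}\right)$ recorded earlier. First I would treat case~1. For $L=\mathbb{R}\mathbf{e}_1\times\{0\}$, a generic point of $L$ is $(s\mathbf{e}_1,0)$, and its image under $\varphi_\tau$ is $\left(s\,e^{\tau A}\mathbf{e}_1,\;s\left\langle e^{\tau\cdot\mathrm{tr}A}\mathbf{\Lambda}_\tau^{(A-\mathrm{tr}A\cdot I_2)}\eta,\mathbf{e}_1\right\rangle\right)$. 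Requiring this to lie in $L$ for all $s$ and all $\tau$ forces two things: the first component $e^{\tau A}\mathbf{e}_1$ must stay in $\mathbb{R}\mathbf{e}_1$ for all $\tau$, which is equivalent to $\mathbf{e}_1$ being an eigenvector of $A$, say $A\mathbf{e}_1=\lambda\mathbf{e}_1$; and the second component $\left\langle e^{\tau\cdot\mathrm{tr}A}\mathbf{\Lambda}_\tau^{(A-\mathrm{tr}A\cdot I_2)}\eta,\mathbf{e}_1\right\rangle$ must vanish identically in $\tau$. Differentiating this scalar function at $\tau=0$ (and using $\mathbf{\Lambda}_0=0$, $\frac{d}{d\tau}\big|_0\mathbf{\Lambda}_\tau^{B}\eta=\eta$) gives $\langle\eta,\mathbf{e}_1\rangle=0$, i.e. $\eta\in\mathbb{R}\mathbf{e}_2$. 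I would then show, conversely, that once $A\mathbf{e}_1=\lambda\mathbf{e}_1$ and $\eta\in\mathbb{R}\mathbf{e}_2$, the conjugate $e^{sB^\top}\eta$ of $\eta$ stays in $\mathbb{R}\mathbf{e}_2$ — here the block-triangular structure of $A$ (hence of $B=A-\mathrm{tr}A\cdot I_2$) matters: writing $A\mathbf{e}_2=\beta\mathbf{e}_2+\alpha\mathbf{e}_1$, the matrix $A$ is lower/upper triangular in the basis $(\mathbf{e}_1,\mathbf{e}_2)$, so $B^\top$ preserves $\mathbb{R}\mathbf{e}_2$ iff... one has to be slightly careful about which triangle, and this is where the constraint ``$\alpha=0$ if $\eta\neq0$'' enters.

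The role of the side condition $\alpha=0$ when $\eta\neq 0$ is the one subtle point, and I would handle it as follows. If $\eta\neq 0$, then since $\eta\in\mathbb{R}\mathbf{e}_2$ we may write $\eta=c\mathbf{e}_2$ with $c\neq0$. The integrand $e^{sB^\top}\eta$ with $B=A-\mathrm{tr}A\cdot I_2$: with $A\mathbf{e}_1=\lambda\mathbf{e}_1$, $A\mathbf{e}_2=\beta\mathbf{e}_2+\alpha\mathbf{e}_1$, we get $B^\top\mathbf{e}_2 = (\beta-\mathrm{tr}A)\mathbf{e}_2+?\,\mathbf{e}_1$ — I need to compute $B^\top$ explicitly, and the off-diagonal term is exactly $\alpha$ times a coordinate vector. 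The requirement that $\langle\mathbf{\Lambda}_\tau^B\eta,\mathbf{e}_1\rangle=\int_0^\tau\langle e^{sB^\top}\eta,\mathbf{e}_1\rangle\,ds=0$ for all $\tau$ forces $\langle e^{sB^\top}\eta,\mathbf{e}_1\rangle\equiv0$, and expanding this in powers of $s$ the first nontrivial coefficient is proportional to $\alpha c$; since $c\neq0$ this yields $\alpha=0$. Conversely if $\eta=0$ the whole second-component expression is automatically zero and no constraint on $\alpha$ is needed. Finally, $A\mathbf{e}_2=\beta\mathbf{e}_2+\alpha\mathbf{e}_1$ is not an additional hypothesis but merely records that $A$ preserves the $\mathbf{e}_1$-axis, so $\mathbf{e}_2$ can be sent anywhere; I would state it this way so the reader sees it is just naming the entries of the (triangular) matrix $A$.

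Case~2 follows the same template with one extra ingredient coming from the second factor being $\mathbb{Z}$ rather than $\{0\}$. Now a point of $L=\mathbb{R}\mathbf{e}_1\times\mathbb{Z}$ is $(s\mathbf{e}_1,n)$ with $n\in\mathbb{Z}$, and $\varphi_\tau(s\mathbf{e}_1,n)=\left(s\,e^{\tau A}\mathbf{e}_1,\;s\langle\cdots\rangle+n\,e^{\tau\cdot\mathrm{tr}A}\right)$. The first-component analysis again gives $A\mathbf{e}_1=\lambda\mathbf{e}_1$, and the coefficient of $s$ in the second component must again vanish, giving $\eta\in\mathbb{R}\mathbf{e}_2$ and $\alpha=0$ if $\eta\neq0$ as before. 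The new condition is that $n\mapsto n\,e^{\tau\cdot\mathrm{tr}A}$ must land back in $\mathbb{Z}$ for all $\tau$; taking $n=1$ this says $e^{\tau\cdot\mathrm{tr}A}\in\mathbb{Z}$ for all $\tau\in\mathbb{R}$, which (a continuous $\mathbb{Z}$-valued function of $\tau$ equal to $1$ at $\tau=0$) forces $e^{\tau\cdot\mathrm{tr}A}\equiv1$, i.e. $\mathrm{tr}A=0$. Combined with $A\mathbf{e}_1=\lambda\mathbf{e}_1$ and $A\mathbf{e}_2=\beta\mathbf{e}_2+\alpha\mathbf{e}_1$ this gives $\beta=-\lambda$, which is precisely the stated form $A\mathbf{e}_2=-\lambda\mathbf{e}_2+\alpha\mathbf{e}_1$. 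The main obstacle, modest as it is, is bookkeeping: being careful that the vanishing of the family of scalar functions $\tau\mapsto\langle e^{\tau\cdot\mathrm{tr}A}\mathbf{\Lambda}_\tau^{(A-\mathrm{tr}A\cdot I_2)}\eta,\mathbf{e}_1\rangle$ really is equivalent to the two algebraic conditions on $\eta$ and $\alpha$, and that the converse direction genuinely reassembles to $\varphi_t(L)=L$ (not just $\varphi_t(L)\subseteq L$, but equality, which follows since $\varphi_{-t}$ is the inverse flow and satisfies the same hypotheses). I would verify the converse by direct substitution into the flow formula, observing that all the offending terms vanish under the stated conditions.
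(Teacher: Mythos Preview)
The paper does not prove this proposition; it is quoted without proof from the authors' earlier work \cite{AEO} (their Proposition~3), so there is no in-paper argument to compare against. That said, your approach via the explicit flow formula is the natural one, and your derivation of the conditions $A\mathbf{e}_1=\lambda\mathbf{e}_1$, $\eta\in\mathbb{R}\mathbf{e}_2$, and (in part~2) $\mathrm{tr}A=0$ is correct.

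The gap is in your extraction of the side condition ``$\alpha=0$ if $\eta\neq 0$''. With $A=\left(\begin{smallmatrix}\lambda&\alpha\\0&\beta\end{smallmatrix}\right)$ one computes $B^\top=(A-\mathrm{tr}A\cdot I_2)^\top=\left(\begin{smallmatrix}-\beta&0\\\alpha&-\lambda\end{smallmatrix}\right)$, so $B^\top\mathbf{e}_2=-\lambda\mathbf{e}_2$: the vector $\mathbf{e}_2$ is an eigenvector of $B^\top$, and the entry $\alpha$ sits in the $\mathbf{e}_1$-column of $B^\top$, not the $\mathbf{e}_2$-column. Hence for $\eta=c\mathbf{e}_2$ one has $e^{sB^\top}\eta=ce^{-s\lambda}\mathbf{e}_2$, giving $\langle e^{sB^\top}\eta,\mathbf{e}_1\rangle\equiv 0$ for every value of $\alpha$. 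The ``first nontrivial coefficient proportional to $\alpha c$'' you anticipate is simply not there.

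More to the point, this computation shows that the condition cannot be derived from invariance at all. Directly from $\XC((x,y),z)=((\lambda x+\alpha y,\beta y),\gamma y+(\lambda+\beta)z)$ one sees $\XC((s,0),0)=((\lambda s,0),0)$, which is tangent to $L=\R\mathbf{e}_1\times\{0\}$ regardless of $\alpha$. Thus $L$ is $\varphi_t$-invariant as soon as $A\mathbf{e}_1\in\R\mathbf{e}_1$ and $\eta\in\R\mathbf{e}_2$, with no restriction on $\alpha$. The extra clause ``$\alpha=0$ if $\eta\neq 0$'' in the quoted statement is therefore not a consequence of $\varphi_t$-invariance by itself; it reflects an additional normalization carried out in \cite{AEO} (conjugation by an automorphism of $\HB$ fixing $L$ can be used to eliminate $\alpha$ when $\gamma\neq 0$). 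No amount of bookkeeping on the flow formula will produce that condition, so the obstacle you flagged as ``modest'' is in fact not surmountable along the route you chose.
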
	
Assume that the subgroup $L=\mathbb{R}\mathbf{e}_1\times \{0\}$ is invariant under the flow of $\mathcal{X}=(\eta, A)$. By Proposition \ref{MainPro}, it follows that
			 $$\eta=\left(\begin{array}{c}
			      0  \\
			      \gamma 
			 \end{array}\right)\;\;\;\mbox{ and }\;\;\;A=\left(\begin{array}{cc}
			 	\lambda & \alpha\\ 0 & \beta
			 \end{array}\right), \;\;\mbox{ with }\alpha=0\;\mbox{ if }\;\gamma\neq 0.$$
		     Then in coordinates, we have
		     $$\XC((x, y), z)=((\lambda x+\alpha y, \beta y), \gamma y+(\lambda+\beta)z).$$
Hence, the general expression of a vector field on the homogeneous space  $L\backslash \HB$ induced by a linear vector field on $\HB$ is given by 
$$\widehat{\XC}_{1, 0}(s, t)=\left(\beta s, (\lambda+\beta)t+\alpha s^2+\gamma s\right), \;\;\mbox{ with }\;\;\alpha=0\;\mbox{ if }\;\;\gamma\neq 0.$$
Similar calculations lead us to obtain a typical induced invariant vector field on the homogeneous space  $L\backslash \HB$ as
$$\widehat{B}_{1, 0}(s, t)=\left(b, c+as\right), \;\;(s, t)\in\mathbb{R}\times\mathbb{R}$$ so that a one-input linear control system $\Sigma _{1,0}$ on $L \backslash \mathbb{H}\simeq
\mathbb{R}\times \mathbb{R}$ has the form (see \cite[Proposition 5]{AEO})
	\begin{flalign*}
   \label{oneinput1D}
		&&\left(\Sigma_{1, 0}\right):\left\{
		\begin{array}{l}
	 		\dot{s}=\beta s+\omega b\\
	 		\dot{t}=(\lambda+\beta)t+\frac{1}{2}\alpha s^2+\gamma s+\omega(c+as)
		\end{array}\right.,  &&
	\end{flalign*}
 where $a, b, c, \alpha, \beta, \gamma\in\R$ with $\alpha=0$ if $\gamma\neq 0$.

The following proposition characterizes the Lie algebra rank condition of the system $\Sigma_{1, 0}$ which is indispensable for the controllability issue.

\begin{proposition}
  \label{oneinput1D}
	 	The one-input LCS $\Sigma_{1, 0}$ on $L\backslash \mathbb{H}\simeq\R\times\R$, 
 satisfies the LARC if and only if 
	 	$$b\cdot \left((b\alpha+a(\lambda-\beta))^2+(b\gamma+c\lambda)^2\right)\neq 0.$$
	 \end{proposition}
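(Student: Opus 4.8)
The plan is to compute the iterated Lie brackets of the two vector fields $f_0=\widehat{\XC}_{1,0}$ and $f_1=\widehat{B}_{1,0}$ at an arbitrary point $(s,t)$ and determine when the resulting vectors span $T_{(s,t)}(\R\times\R)=\R^2$ at every point. Writing $f_0(s,t)=(\beta s,\,(\lambda+\beta)t+\tfrac12\alpha s^2+\gamma s)$ and $f_1(s,t)=(b,\,c+as)$, the first step is to observe that the LARC at a point fails precisely when $f_0$ and $f_1$ are linearly dependent there \emph{and} all brackets are also dependent on them; so I would first look at $f_1$ together with the bracket $[f_1,f_0]$. A direct computation gives $[f_1,f_0]=Df_0\cdot f_1-Df_1\cdot f_0$, and since $Df_1$ has only the entry $a$ in the lower-left corner while $Df_0$ is $\mathrm{diag}$-like with lower-left entry $\alpha s+\gamma$, one finds $[f_1,f_0](s,t)=(\beta b,\,(\lambda+\beta)(c+as)+(\alpha s+\gamma)b-a\beta s)$. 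The determinant of the matrix with columns $f_1$ and $[f_1,f_0]$ is then a polynomial in $s$; I would compute it and check whether it can vanish identically in $s$.

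The key computation is this determinant:
\[
\det\begin{pmatrix} b & \beta b\\ c+as & (\lambda+\beta)(c+as)+(\alpha s+\gamma)b-a\beta s\end{pmatrix}
= b\big[(\lambda+\beta)(c+as)+(\alpha s+\gamma)b-a\beta s\big]-\beta b(c+as).
\]
Expanding and collecting, the coefficient of $s$ is $b(b\alpha+a(\lambda-\beta))$ and the constant term is $b(b\gamma+c\lambda)$. Hence the two vectors $f_1$ and $[f_1,f_0]$ span $\R^2$ at \emph{some} point unless both $b(b\alpha+a(\lambda-\beta))=0$ and $b(b\gamma+c\lambda)=0$, i.e.\ unless $b\cdot\big((b\alpha+a(\lambda-\beta))^2+(b\gamma+c\lambda)^2\big)=0$. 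This shows the LARC \emph{fails} when the displayed quantity is zero — but to get the ``only if'' direction cleanly I must also check that when $b=0$, or when $b\neq0$ but both bracket coefficients vanish, no \emph{higher} bracket can rescue the rank. When $b=0$, $f_1=(0,c+as)$ is everywhere vertical; since $\dot s=\beta s$ alone, the $s$-coordinate evolves autonomously and no vector field in $\LC$ has a nonzero first component on the line $s=0$ (one checks $f_0,[f_0,f_1],[f_0,[f_0,f_1]],\dots$ all have first component a multiple of $\beta s$ or $b$, hence $0$ there), so $\LC(0,t)\neq\R^2$. When $b\neq 0$ and the two coefficients vanish, $f_1$ and $[f_1,f_0]$ are parallel at every point, and I would argue by induction that $[f_1,\cdot]$ and $[f_0,\cdot]$ preserve the one-dimensional distribution spanned by $f_1$ — the cleanest way is to exhibit explicit coordinates (or an explicit affine function $\Phi(s,t)$) that is invariant, reducing the system to a $1$-dimensional one; this is where the relations $b\alpha=-a(\lambda-\beta)$ and $b\gamma=-c\lambda$ get used.

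For the converse (``if the quantity is nonzero then LARC holds''), the determinant computation above already does the work at points where it is nonzero, but since it is a polynomial in $s$ of degree $\le 1$ with not-all-zero coefficients, it vanishes at \emph{most} at one value of $s$; at such an exceptional point I would produce rank $2$ using either $f_0$ itself (whose second component contains the $\tfrac12\alpha s^2$ term, breaking the degeneracy when $\alpha\neq0$) or the next bracket $[f_1,[f_1,f_0]]$, whose computation is short since $f_1$ is affine. I expect the main obstacle to be the bookkeeping in the degenerate sub-cases of the ``only if'' direction — in particular verifying in closed form that the invariant affine function exists when $b\neq0$ and both coefficients vanish, and handling the $\gamma\neq0\Rightarrow\alpha=0$ constraint consistently throughout — rather than the generic bracket computation, which is routine.
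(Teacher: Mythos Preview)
Your approach is essentially the same as the paper's: both compute the first bracket $[f_1,f_0]$, extract from it the linear-in-$s$ quantity with coefficients $b\alpha+a(\lambda-\beta)$ and $b\gamma+c\lambda$ (the paper writes $[\widehat{\XC}_{1,0},\widehat{B}_{1,0}]=-\beta\widehat{B}_{1,0}-(sZ_1+Z_2)$ with $Z_1,Z_2$ exactly your two constants), and then use the second bracket $[f_1,[f_1,f_0]]=bZ_1$ to handle the single exceptional $s$-value. For the ``only if'' direction the paper simply observes that all iterated brackets stay in the span of $\{f_0,f_1,Z_1,Z_2\}$, so when $b=0$ or $Z_1=Z_2=0$ the algebra collapses; this is a cleaner route than your proposed invariant-function argument, but your sketch (especially the $b=0$ case and the observation that $[f_0,f_1]\in\mathrm{span}\{f_1\}$ when the coefficients vanish) reaches the same conclusion.
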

\begin{proof}
 Let us show that $\mathrm{span}_{\mathcal{L}A}\{\widehat{\XC}_{1, 0}, \widehat{B}_{1, 0}\}(s,t)= \R^2 $ for all $(s,t)\in \R \times \R$. Firstly, looking at the Lie bracket of $\widehat{\XC}_{1, 0}$ and $\widehat{B}_{1, 0}$ we have that
 \begin{align*}
 [\widehat{\XC}_{1, 0}, \widehat{B}_{1, 0}]=&\bigg(-b\beta ,(a\beta s-b(\alpha s +\gamma)-(\lambda +\beta)(c+as)) \bigg) \\
 =& -\beta \widehat{B}_{1, 0} - \bigg\{ \underbrace{(0,b\alpha+\lambda a - a \beta)}_{:=Z_1}s + \underbrace{(0,b\gamma + \lambda c)}_{:=Z_2} \bigg\} \\
 =& -\beta \widehat{B}_{1, 0} -  \{sZ_1+Z_2\} = -\beta \widehat{B}_{1, 0} - Z.
 \end{align*}
Then let's consider the other brackets, respectively:
\begin{align*}
 [sZ_1 , \widehat{\XC}_{1, 0}] &= \lambda s Z_1  && [sZ_1 , \widehat{B}_{1, 0}] = -b Z_1, \\
 [Z_2 , \widehat{B}_{1, 0}] &=0  && [Z_2 , \widehat{\XC}_{1, 0}] = (\lambda+\beta) Z_2, \\
 [[\widehat{\XC}_{1, 0}, \widehat{B}_{1, 0}],\widehat{\XC}_{1, 0}] &=-\beta^2 \widehat{B}_{1, 0}-(\lambda+\beta)Z - \beta Z_2 && [[\widehat{\XC}_{1, 0}, \widehat{B}_{1, 0}],\widehat{B}_{1, 0}]=bZ_1.
\end{align*}
If it is continued in this process, we see that all brackets just depend on the vector fields $\widehat{\XC}_{1, 0},\widehat{B}_{1, 0}, Z_1, Z_2$ and $Z$. Finally, one can obtain that LARC satisfied if and only if $bZ \neq 0$. Thus, the proof is complete.
\end{proof} 

In what follows, we define singular LCSs.

\begin{definition}
\label{singular}
We say that $\Sigma _{1,0}$ on $L \backslash \mathbb{H}\simeq
\mathbb{R}\times \mathbb{R}$ is singular if the associated vector field 
$$\widehat{\XC}_{1, 0}(s, t)=\left(\beta s, (\lambda+\beta)t+\alpha s^2+\gamma s\right),$$
satisfies $\beta(\lambda+\beta)=0.$
\end{definition}

The precise description of the dynamics of the LCS $\Sigma_{1, 0}$ is quite difficult. Because of that our study in the present paper will be focused on singular systems.

\begin{remark}
  The previous definition is related to the set of singularities of the induced vector field $\widehat{\XC}_{1, 0}$. In fact, by a simple calculation, one sees that $\beta(\lambda+\beta)\neq 0$ if and only if $(0, 0)$ is the only singularity of $\widehat{\XC}_{1, 0}$.  
\end{remark}

 \section{Controllability and control sets of $\Sigma_{1, 0}$}\label{controlsec}
Controllability and the control sets of LCSs on the homogeneous spaces of $%
\mathbb{H}$ satisfying the LARC are thoroughly classified in this section. By the previous sections, any LCS on a homogeneous space $L\backslash \mathbb{H}
$ of $\mathbb{H}$ is equivalent to one of the control-affine systems $\Sigma
_{1,0}$ and $\Sigma _{1,1}$. In what follows, we will make a detailed
analysis of the system $\Sigma _{1,0}$ since the other one was already
studied in our earlier work.

It follows from the singularity imposed on the associated vector field that we should treat the possibilities
$\beta(\lambda+\beta)=0$. In what follows, we do a
detailed analysis of the possible control sets of the one-input LCS on %
\mbox{$(\mathbb{R}\mathbf{e}%
_{1}\times \{0\})\backslash\HB$.} We divide such an analysis
in two main cases depending on $\alpha$ together with their subcases (whenever they exist) depending on the eigenvalues of $A$. See the figure below.

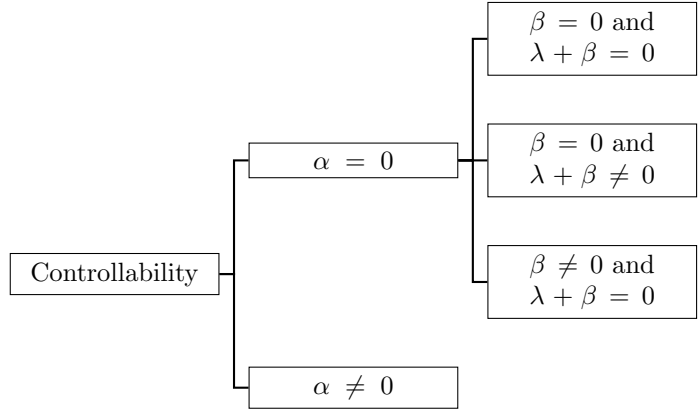
\begin{figure}[H]
\centering
\begin{tikzpicture}[grow'=right,level distance=1.25in,sibling distance=.25in]
\tikzset{edge from parent/.style= 
            {thick, draw, edge from parent fork right},
         every tree node/.style=
            {draw,minimum width=1in,text width=1in,align=center}}
\Tree 
    [. Controllability 
        [.{$\alpha=0$}
                [.{$\beta=0$ and $\lambda+\beta =0$ } ]
            [.{$\beta =0$ and $\lambda+\beta \neq 0$ } ]
            [.{$\beta \neq 0$ and $\lambda+\beta = 0$ } ]            
        ]
        [.$\alpha \neq 0$ ]
    ]
\end{tikzpicture}
\caption{Description of all possible cases to be analyzed on}\label{casesss}
\end{figure}
 
 \subsection{Controllability in the case $\alpha=0$.}
In this section, we will be assuming that $\alpha=0$, and hence our system does not have a quadratic term. The analysis for this case consists of four parts and is done by analyzing the possibilities for the values of $\beta$ and $\lambda+\beta$ as follows:
 
\subsubsection{The subcase $\beta=\lambda+\beta=0$:}

  In this case, the system is given by 
  
  \begin{flalign*}
	 	&&\left(\Sigma_{1, 0}\right):\left\{
	 	\begin{array}{l}
	 		\dot{s}=\omega b\\
	 		\dot{t}=(\gamma +a\omega )s+ c\omega
	 	\end{array}\right. &&
	 \end{flalign*}	
where $\omega\in\Omega$. By Proposition \ref{oneinput1D}, it satisfies the LARC if and only if $b\gamma\neq 0$.
 
 \begin{proposition}
     The system $\Sigma_{1, 0}$ is controllable.
 \end{proposition}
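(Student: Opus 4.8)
The plan is to establish controllability by hand, exhibiting explicit trajectories and using only that the LARC forces $b\gamma\neq 0$. Write $\Omega=[\omega_-,\omega_+]$ with $\omega_-<0<\omega_+$ and put $\rho:=\min\{-\omega_-,\omega_+\}>0$, so that the constant controls $\pm\rho$ are admissible. Two structural features of this subcase will do all the work: $\dot s=\omega b$ does not involve $t$ and, since $b\neq0$, lets us increase or decrease $s$ at will; and $\dot t=(\gamma+a\omega)s+c\omega$ also does not involve $t$, so the $t$-displacement produced by any fixed control depends only on the $s$-history, not on the starting height.

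First I would steer an arbitrary initial point $(s_0,t_0)$ onto the line $\{s=0\}$: if $s_0\neq0$, the constant control $\omega$ with $|\omega|=\rho$ and $\omega b$ of the opposite sign to $s_0$, applied for time $\tau=|s_0|/(\rho|b|)$, brings $s$ to $0$, landing at some point $(0,t')$ whose second coordinate we make no attempt to control. Then comes the key step: from a point $(0,t')$ apply $+\rho$ for a time $\tau$ and then $-\rho$ for the same time $\tau$. The net change of $s$ is zero, and a one-line integration of $\dot t$ over the two phases shows the net change of $t$ equals $\gamma b\rho\tau^{2}$; performing the two constant controls in the opposite order gives net change $-\gamma b\rho\tau^{2}$. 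Since $\gamma b\neq0$, letting $\tau$ run over $(0,\infty)$ these two families of ``loops'' realise every real value, so $\{0\}\times\R\subseteq\OC^{+}\big((0,t')\big)$; because the $t$-displacement is independent of $t'$, combining with the previous paragraph yields $\{0\}\times\R\subseteq\OC^{+}(p)$ for every $p$.

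It then remains to reach an arbitrary target $(s_1,t_1)$ from the line $\{s=0\}$. If $s_1=0$ this is already done; if $s_1\neq0$, choose an admissible constant control $\omega\neq0$ with $\omega b$ of the sign of $s_1$ and set $\tau=s_1/(\omega b)>0$, so that applying $\omega$ for time $\tau$ carries $s$ from $0$ to $s_1$, producing some definite $t$-displacement $h=(\gamma+a\omega)\omega b\,\tau^{2}/2+c\omega\tau$. Pre-positioning the trajectory at $(0,t_1-h)$ — possible by the previous step — and then applying this control lands it exactly at $(s_1,t_1)$. Hence $\OC^{+}(p)=\R\times\R$ for every $p$, which is the assertion.

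The only place that calls for any care, and the step I would flag as the ``obstacle'' (though it is a routine computation, not a conceptual difficulty), is the loop calculation: one must check that the contributions of the $as$- and $cs$-type terms cancel, leaving net $t$-displacement exactly $\pm\gamma b\rho\tau^{2}$, and that reversing the order of the two constant controls flips the sign — this is precisely what makes both half-lines of $\{0\}\times\R$ reachable in forward time. No appeal to accessibility beyond $b\gamma\neq0$, nor any topological control-set machinery, is needed.
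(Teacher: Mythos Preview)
Your argument is correct, and the loop computation checks out: with $\rho$ then $-\rho$ the $c\omega$-terms cancel pairwise and the $a\rho$-contributions cancel between the two half-phase integrals, leaving exactly $\gamma b\rho\tau^{2}$; swapping the order flips the sign. The pre-positioning step is also fine since the $t$-displacement along any constant-control arc depends only on the $s$-history.

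Your route is genuinely different from the paper's. The paper argues geometrically: for each constant control $\omega\neq 0$ the integral curve through a point $\mathbf{v}_0$ is a parabola $\Gamma_{\mathbf{v}_0,\omega}$ in the $(s,t)$-plane whose concavity is governed by the sign of $b\omega$; the proof then distinguishes whether the initial point lies inside or outside the parabola through the target and, in each case, builds a trajectory by intersecting a downward-opening parabola with an upward-opening one (using the control $\omega=0$ to move vertically when needed). Your argument bypasses this geometry entirely by reducing everything to the axis $\{s=0\}$ and an explicit two-phase loop. What you gain is brevity and a clean algebraic identity that makes the role of the LARC hypothesis $b\gamma\neq 0$ completely transparent; what the paper's approach offers instead is a picture of the full phase portrait for constant controls, which fits the visual style of the rest of their analysis but is heavier for this particular statement.
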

\begin{proof}
For $\omega\neq 0$ and $\mathbf{v}_0=(s_0, t_0)$, the solutions $\varphi(\tau, \mathbf{v}_0, \omega)$ of $\Sigma_{1, 0}$ are given, component-wise, as
\begin{equation}	
 \begin{aligned}
		\label{sola}
	\varphi_1(\tau, \mathbf{v}_0, \omega)&=s_0+\omega b\tau \\
 \varphi_2(\tau, \mathbf{v}_0, \omega)&=\frac{b\omega(a\omega+\gamma)}{2}\tau^2+\left(s_0(a\omega+\gamma)+c\omega\right)\tau+t_0.
\end{aligned}
\end{equation}
	A simple calculation shows that such a solution coincides with the parabola 
	$$\Gamma_{\mathbf{v}_0, \omega}:=\left\{\left(s, \frac{(a\omega+\gamma)}{2b\omega}(s-s_0)^2+\frac{s_0(a\omega+\gamma)+c\omega}{b\omega}(s-s_0)+t_0\right)~:~ s\in\R\right\},$$	
	with concavity determined by the sign of $b\omega$. Let us analyze the case where $\gamma, b\in\R^+$, since other choices are treated similarly. For this choice, there exists $\varepsilon>0$ such that $a\omega+\gamma>0$ for $\omega\in(-\varepsilon, \varepsilon)$.  
	
	Let
	$$\mathbf{v}_0\neq \mathbf{v}_1\in\R^2\;\;\;\mbox{ and } \;\;\;-\varepsilon<\omega_0<0<\omega_1<\varepsilon.$$ 
	A trajectory connecting $\mathbf{v}_0$ to $\mathbf{v}_1$ can be constructed in the two steps as follows:
	
	\begin{itemize}
		\item[Step 1.] $\mathbf{v}_0$ belongs to the interior of the region determined by the parabola $\Gamma_{\mathbf{v}_1, \omega_1}$.

		Since $b, \gamma\in\R^+$ we have that $\tau\rightarrow+\infty$ implies that 
		$$\varphi_1(\tau, \mathbf{v}_0, \omega_0)\rightarrow-\infty\hspace{.5cm}\mbox{ and }\hspace{.5cm}\varphi_2(\tau, \mathbf{v}_0, \omega_0)\rightarrow-\infty.$$
		As a consequence, there exists $\tau_0>0$ such that $\tilde{\mathbf{v}}_1:=\varphi(\tau_0, \mathbf{v}_0, \omega_0)$ belongs to $\Gamma_{\mathbf{v}_1, \omega_1}$. Write $\tilde{\mathbf{v}}_1=(\tilde{s}_1, \tilde{t}_1)$ and consider the following cases:

  \begin{enumerate}
      \item[(i)]  $\tilde{s}_1\leq s_1$ :
		
		Since the solution starting at $\tilde{\mathbf{v}}_1$ associated to the control $\omega_1$ lies on the parabola $\Gamma_{\mathbf{v}_1, \omega_1}$ and  
	    $$\tau\rightarrow+\infty\hspace{.5cm}\implies\hspace{.5cm} \varphi_1(\tau, \tilde{\mathbf{v}}_1, \omega_1)\rightarrow+\infty,$$
	    there exists $\tau_1>0$ such that $\varphi(\tau_1, \tilde{\mathbf{v}}_1, \omega_1)=\mathbf{v}_1$. By concatenation, we get a trajectory from $\mathbf{v}_0$ to $\mathbf{v}_1$ (Figure \ref{dentro1}).
		
		\item[(ii)]  $\tilde{s}_1> s_1$:
		
		Since the parabolas $\Gamma_{\mathbf{v}_1, \omega_0}$ and $\Gamma_{\mathbf{v}_0, \omega_0}$ coincides with solutions of $\Sigma_{1, 0}$ for the constant control $\omega_0$, they are parallels. Therefore, the assumption $\tilde{s}_1>s_1$ implies that $\mathbf{v}_0$ lies in the interior of the region determined $\Gamma_{\mathbf{v}_1, \omega_0}$. On the other hand, $\tau\rightarrow+\infty$ implies 
		$$\varphi_1(\tau, \mathbf{v}_0, \omega_1)\rightarrow+\infty\hspace{.5cm}\mbox{ and }\hspace{.5cm}\varphi_2(\tau, \mathbf{v}_0, \omega_1)\rightarrow+\infty,$$
		and consequently, there exists $\tau_0>0$ such that $\tilde{\mathbf{v}}_0:=\varphi(\tau_0, \mathbf{v}_0, \omega_1)$ belongs to the parabola determined by $\mathbf{v}_1$ and $\omega_0$. By writing $\tilde{\mathbf{v}}_0=(\tilde{s}_0, \tilde{t}_0)$ it holds that $\tilde{s}_0> s_1$ and hence, $\varphi(\tau_1, \tilde{\mathbf{v}}_0, \omega_0)=\mathbf{v}_1$ for some $\tau_1>0$. By concatenation we obtain a trajectory from $\mathbf{v}_0$ to $\mathbf{v}_1$ (Figure \ref{dentro2}).
	\end{enumerate}

 \begin{figure}[H]
	\centering
	\begin{subfigure}{.38\textwidth}
		\centering
		\includegraphics[width=1\linewidth]{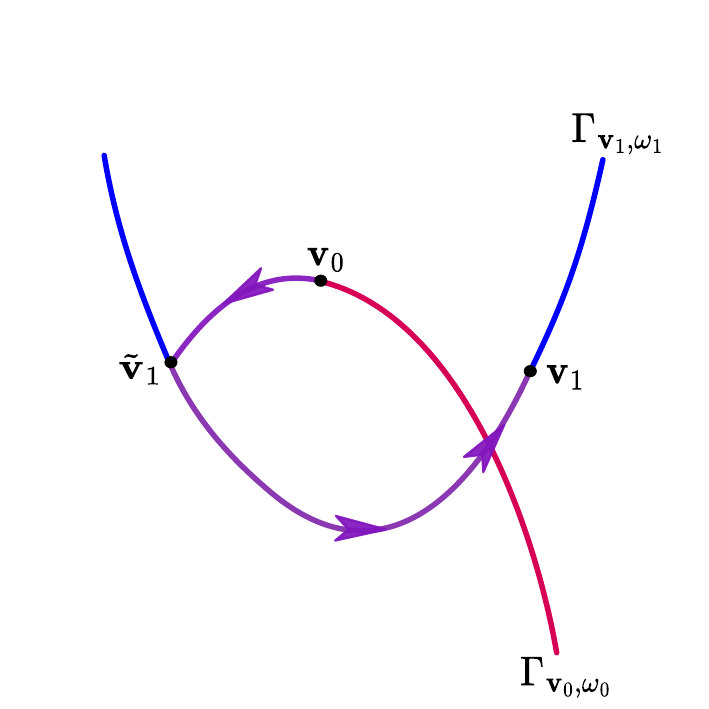}
		\caption{Trajectory starting inside $\Gamma_{\mathbf{v}_1, \omega_1}$ with $\tilde{s}_1\leq s_1$}
		\label{dentro1}
	\end{subfigure}%
 \hspace{1cm}
	\begin{subfigure}{.38\textwidth}
		\centering
		\includegraphics[width=1\linewidth]{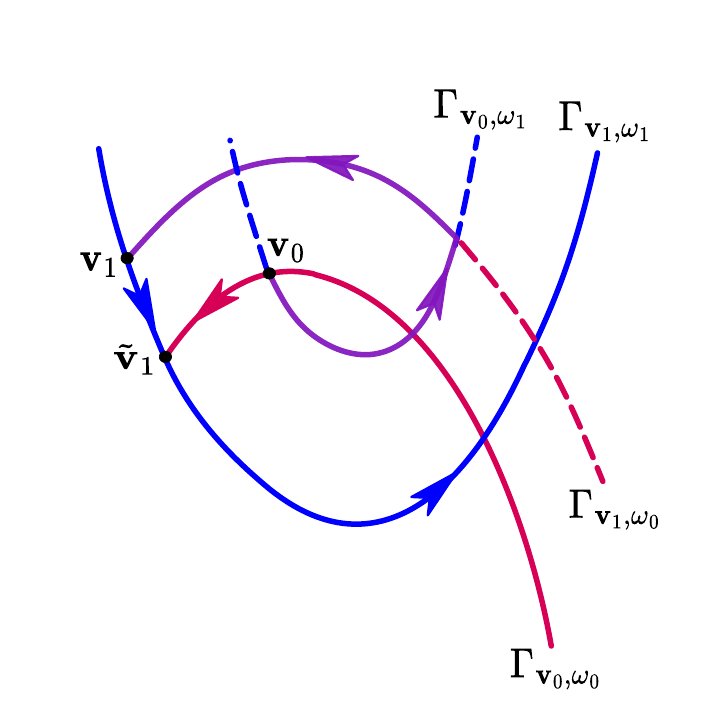}
		\caption{\mbox{Trajectories starting inside $\Gamma_{\mathbf{v}_1, \omega_1}$ with $\tilde{s}_1> s_1$}}
  \label{dentro2}
	\end{subfigure}
\end{figure}
		
\item[Step 2.] $\mathbf{v}_0$ belongs to the exterior of the region determined $\Gamma_{\mathbf{v}_1, \omega_1}$.

  Let us show, in this case, the existence of a trajectory connecting $\mathbf{v}_0$ to a point in the interior $\Gamma_{\mathbf{v}_1, \omega_1}$, which by item (a) implies the result.

Since the interior of $\Gamma_{\mathbf{v}_1, \omega_1}$ is the set
  $$\left\{(s, t)\in\R^2~:~  t>\frac{(a\omega_1+\gamma)}{2b\omega_1}(s-s_1)^2+\frac{s_1(a\omega_1+\gamma)+c\omega_1}{b\omega_1}(s-s_1)+t_1\right\},$$
  we construct a trajectory from $\mathbf{v}_0$ to a point in this region as follows:

  \begin{enumerate}
      \item[(i)] Since $$\tau\rightarrow+\infty\hspace{.5cm}\implies\hspace{.5cm}\varphi_1(\tau, \mathbf{v}_0, \omega_1)\rightarrow +\infty,$$
  there exists $\tau_0>0$ such that the point 
  $\varphi(\tau_0, \mathbf{v}_0, \omega_1):=\tilde{\mathbf{v}}_0=(\tilde{s}_0, \tilde{t}_0)$ satisfies $\tilde{s}_0>0$;

  \item [(ii)]  Now, with control $\omega= 0$ we get that
  $$\varphi(\tau, \tilde{\mathbf{v}}_0, 0)=(\tilde{s}_0, \tilde{s}_0\gamma\tau+\tilde{t}_0),$$
  which for $\tau>0$ large enough satisfies 
$$\tilde{s}_0\gamma\tau+\tilde{t}_0>\frac{(a\omega_1+\gamma)}{2b\omega_1}(\tilde{s}_0-s_1)^2+\frac{s_1(a\omega_1+\gamma)+c\omega_1}{b\omega_1}(\tilde{s}_0-s_1)+t_1,$$
assuring the existence of $\tau_1>0$ such that $\phi(\tau_1, \tilde{\mathbf{v}}_0, 0)$ belongs to the interior of $\Gamma_{\mathbf{v}_1, \omega_1}$ as stated. 
  \end{enumerate}

  
  \end{itemize}
  
\end{proof}

\subsubsection{The subcase $\beta=0, \lambda+\beta\neq0$:}

In this case, $b\lambda\neq 0$ and the diffeomorphism 
  $$f:\mathbb{R}^2\rightarrow \R^2,\hspace{1cm}f(s, t)=\left(\frac{s}{b}, t+\frac{\gamma}{\lambda}s\right),$$
  conjugates our initial system to the system
   \begin{flalign*}
	 	&&\left(\Sigma_{1, 0}\right):\left\{
	 	\begin{array}{l}
	 		\dot{s}=\omega\\ \dot{t}=\lambda t+\omega(c+as)\end{array}\right.  &&
	 \end{flalign*}	
where the LARC is equivalent to  $a^2+c^2\neq 0 $. Let us assume w.l.o.g. that $\lambda<0$ and $a\geq 0$,  since the other cases are analogous.

\bigskip

For each $\omega\in \Omega$,  let us define the function 
$$F_{\omega}:\R^2\rightarrow \mathbb{R}, \hspace{1cm}F_{\omega}(s, t)=\lambda^2t+\omega(\lambda(c+as)+a \omega).$$

By a straightforward calculation, one shows that, for $\omega\in\Omega$ and $\mathbf{v}=(s, t)$, the first coordinate of the solution of $\Sigma_{1, 0}$ is given by $\varphi_1(\tau, \mathbf{v}, \omega)=s+\tau \omega$, and the second one is determined by the relation
 \begin{equation}
     \label{retas}
     F_{\omega}\left(\varphi(\tau, \mathbf{v}, \omega)\right)=\rme^{\lambda \tau}F_{\omega}(\mathbf{v}).
 \end{equation}
 
Moreover, for any $\omega_0, \omega_1, \omega_2\in\Omega$, it holds that
\begin{equation}
    \label{twocontrol}
    F_{\omega_1}\left(\varphi(\tau, \mathbf{v}, \omega_0)\right)-F_{\omega_2}\left(\varphi(\tau, \mathbf{v}, \omega_0)\right)=a\omega_0(\omega_1-\omega_2)\lambda\tau+F_{\omega_1}(\mathbf{v})-F_{\omega_2}(\mathbf{v}), \;\;\;\forall\tau \in\mathbb{R}, \mathbf{v}\in\mathbb{R}^2.
\end{equation}

Note that $F^{-1}_{\omega}(0)$ is a line on the $\R^2$ whose inclination, with relation to the $s$-axis, and intersection with the $t$-axis are given, respectively, by 
$$-\frac{a \omega}{\lambda}\hspace{.5cm}\mbox{ and }\hspace{.5cm}-\frac{\omega}{\lambda^2}(c\lambda +a\omega).$$ 

By relation (\ref{retas}) and the assumption that $\lambda<0$, the line $F^{-1}_{\omega}(0)$ is an asymptote of the curve $\tau\mapsto\varphi(\tau, \mathbf{v}, u)$ as $\tau\rightarrow+\infty$ (see Figure \ref{fig2}). 
\begin{figure}[H]
	\centering
	\includegraphics[scale=.31]{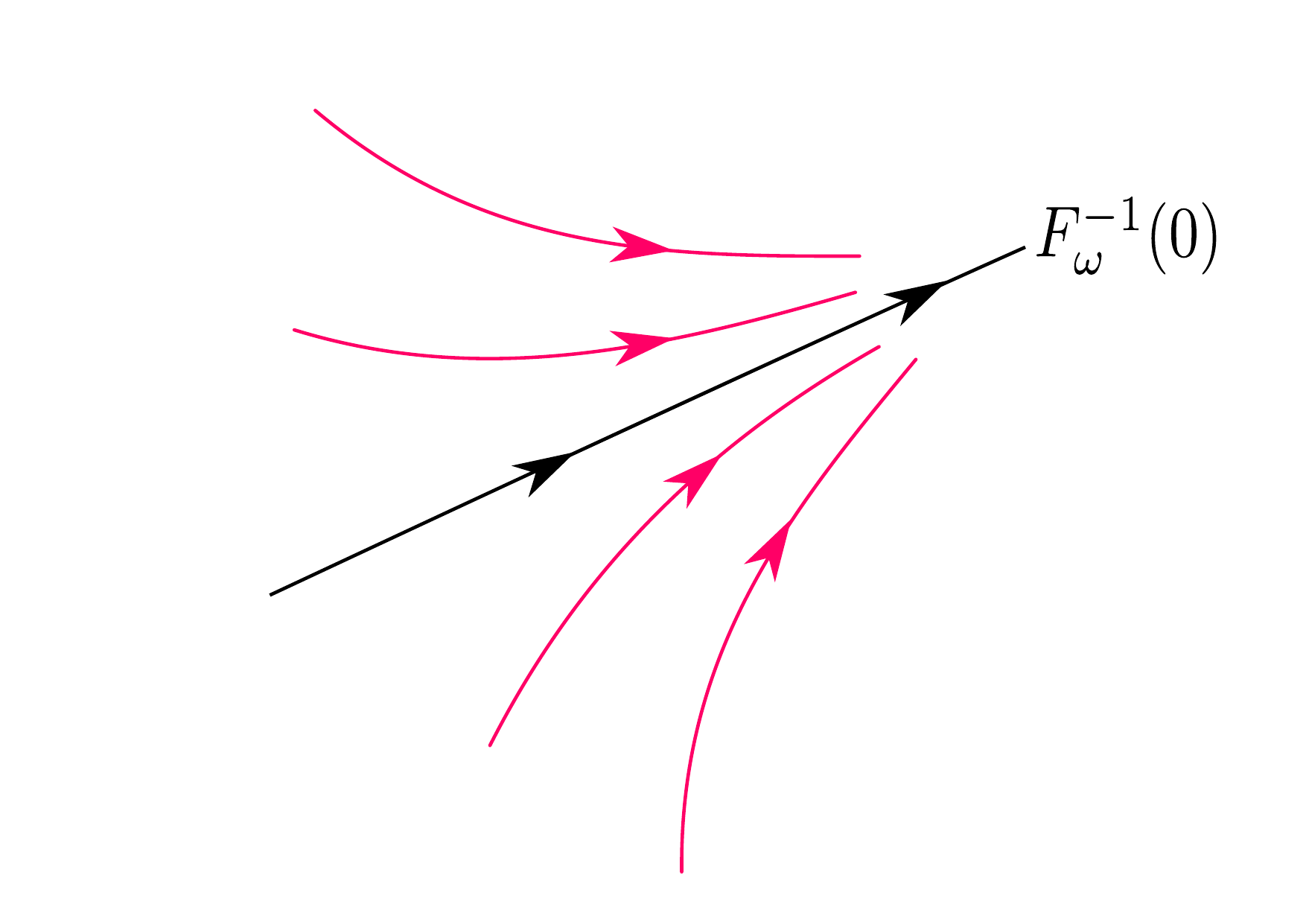}
	\caption{The lines $F^{-1}_{\omega}(0)$ are asymptotes of the curve $\tau\mapsto\varphi(\tau, \mathbf{v}, u)$ as $\tau\rightarrow+\infty$}
	\label{fig2}
\end{figure}

Assume that $a\neq 0$, and let us define some regions with invariant properties. The first one is the region $\CC^-$ given by
$$\CC^-:=\left\{ \mathbf{v}\in\mathbb{R}^2~:~ F_{\omega^-}(\mathbf{v})<0 \;\mbox{ and }\;F_{\omega^+}(\mathbf{v})<0\right\} .$$
For the second one, let us consider the point $\mathbf{v}_a=(-c/a, 0)$ and define 
$$\CC^+=\Bigl\{(s, t)\in\R^2~:~\exists\tau\geq 0 \mbox{ and }\omega\in\{\omega^-, \omega^+\} \mbox{ with } s=\varphi_1(\tau, \mathbf{v}_a, \omega)\mbox{ and }t>\varphi_2(\tau, \mathbf{v}_a, \omega)\Bigr\}.$$
Geometrically, $\CC^-$ is the set of points under the lines $F_{\omega^-}^{-1}(0)$ and $F_{\omega^+}^{-1}(0)$ and $\CC^+$  the set of points over the curves 
$$\tau\in(0, +\infty)\mapsto \varphi(\tau, \mathbf{v}_a, \omega^-)\;\;\;\mbox{ and }\;\;\; \tau\in(0, +\infty)\mapsto \varphi(\tau, \mathbf{v}_a, \omega^+),$$
as depicted in Figure \ref{fig1} below. 
\begin{figure}[H]
	\centering
	\includegraphics[scale=.38]{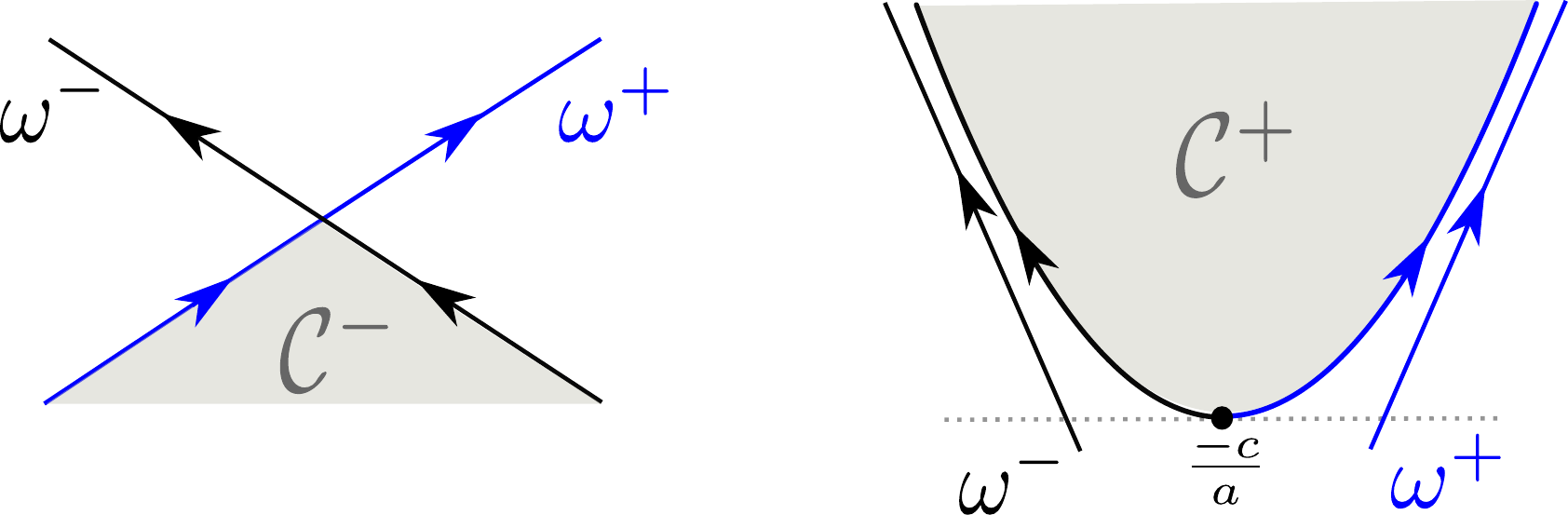}
	\caption{A geometrical description of the regions $\CC^-$ and $\CC^+$}
	\label{fig1}
\end{figure}

The solutions starting at the point $\mathbf{v}_a$ for constant control can be explicitly calculated as follows: Since $F_{\omega}(\mathbf{v}_a)=a\omega^2$, relation (\ref{retas}) gives us that
$$\lambda^2\varphi_2(\tau, \mathbf{v}_a, \omega)=-\omega(\lambda(a\underbrace{(-c/a+\tau\omega)}_{\varphi_1(\tau, \mathbf{v}_a, \omega)}+c)+a\omega)+a\omega^2\rme^{\lambda\tau}=a\omega^2(\rme^{\lambda\tau}-\lambda\tau-1),$$
implying that 
\begin{equation}
    \label{solution}
    \varphi(\tau, \mathbf{v}_a, \omega)=\left(-\frac{c}{a}+\tau\omega~,~ \frac{a\omega^2}{\lambda^2}(\rme^{\lambda\tau}-\lambda\tau-1)\right).
\end{equation}

The next lemma assures that $\CC^-$ and $\CC^+$ are invariant in negative time and also proves controllability in some regions determined by the lines $F_{\omega}^{-1}(0)$.

\begin{lemma}
\label{lema}
Assume $a>0$. It holds:

\begin{itemize}

\item[(1)] The regions $\CC^-$ and $\CC^+$ are invariant in negative time.

\item[(2)] For any $\omega_1<0<\omega_2$, the region
$$\mathcal{C}(\omega_1, \omega_2)=\{\mathbf{v}\in\mathbb{R}^2~:~ F_{\omega_1}(\mathbf{v})\cdot F_{\omega_2}(\mathbf{v})<0\},$$
is controllable.

\end{itemize}

\end{lemma}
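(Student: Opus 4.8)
The strategy is to exploit the explicit formulas (\ref{retas}), (\ref{twocontrol}), and (\ref{solution}) for the flow, treating everything in terms of the level functions $F_\omega$, whose behavior along trajectories is so transparent. For part (1), I would argue that each defining inequality of $\CC^-$ is preserved under the backward flow for every admissible constant control, and then upgrade to arbitrary piecewise-constant controls by concatenation. Concretely: fix $\omega\in\Omega$ and $\mathbf{v}\in\CC^-$; by (\ref{retas}), $F_\omega(\varphi(-\tau,\mathbf{v},\omega)) = \rme^{-\lambda\tau}F_\omega(\mathbf{v})$, and since $\lambda<0$ this has the same sign as $F_\omega(\mathbf{v})$, so $F_\omega$ stays negative along the backward orbit. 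The point is that this must hold \emph{simultaneously} for $\omega^-$ and $\omega^+$ — but for a control that is not constantly $\omega^\pm$, say constantly some $\omega\in(\omega^-,\omega^+)$, I would use (\ref{twocontrol}) with $\omega_0=\omega$ to compare $F_{\omega^-}$ and $F_{\omega^+}$ along the orbit: the difference $F_{\omega^-}(\varphi(-\tau,\mathbf{v},\omega))-F_{\omega^+}(\varphi(-\tau,\mathbf{v},\omega))$ is an affine function of $\tau$, so I can track both signs. A cleaner route, which I would prefer, is to observe that the region $\{F_{\omega^-}<0\}$ alone is backward-invariant for the control $\omega^-$ and for controls close to it, and note that the boundary line $F_{\omega^-}^{-1}(0)$, being the solution curve's asymptote, is only crossed ``upward'' in forward time — hence never crossed in backward time starting from below; the same for $\omega^+$; the intersection of the two backward-invariant half-planes is then $\CC^-$. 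For $\CC^+$, I would use (\ref{solution}): the two boundary curves $\tau\mapsto\varphi(\tau,\mathbf{v}_a,\omega^\pm)$ are themselves trajectories, and a point strictly above both of them stays strictly above under the backward flow because the ``over'' relation, defined fibre-wise over the $s$-coordinate, is preserved — here I would need the monotonicity of $\varphi_1(\tau,\mathbf{v}_a,\omega)=-c/a+\tau\omega$ in $\tau$ together with the fact that two solutions of the same constant-control system are ordered in $t$ if they are ordered at one time (the $t$-equation is affine in $t$ with coefficient $\lambda$, so differences of solutions evolve as $\rme^{\lambda\tau}$ times a constant).

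**Part (2): controllability of $\mathcal{C}(\omega_1,\omega_2)$.**

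Here $\mathcal{C}(\omega_1,\omega_2)=\{F_{\omega_1}\cdot F_{\omega_2}<0\}$ is an open ``wedge'' between the two lines $F_{\omega_1}^{-1}(0)$ and $F_{\omega_2}^{-1}(0)$ (these lines are distinct precisely because $a\neq0$ makes their slopes $-a\omega_i/\lambda$ differ). The plan is to show this wedge is a control set and that exact controllability holds in its interior, i.e. any $\mathbf{v},\mathbf{w}\in\mathcal{C}(\omega_1,\omega_2)$ can be joined by a trajectory. I would first establish \emph{forward} accessibility within the wedge: starting from $\mathbf{v}$, using the constant control $\omega_1$ drives the trajectory toward the line $F_{\omega_1}^{-1}(0)$ (by (\ref{retas}), $F_{\omega_1}\to0$ while — I must check — $F_{\omega_2}$ stays of the right sign, which follows from (\ref{twocontrol}) since along the $\omega_1$-orbit the difference $F_{\omega_1}-F_{\omega_2}$ is affine in $\tau$ with a controllable sign), and symmetrically control $\omega_2$ drives toward $F_{\omega_2}^{-1}(0)$; alternating these two ``pushes'' lets one move freely along the wedge. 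The key reachability lemma I would prove: from any $\mathbf{v}$ in the wedge one can reach a neighborhood of any other point $\mathbf{w}$ in the wedge — achieved by first flowing with $\omega_1$ to adjust one coordinate, then with $\omega_2$ to adjust the other, exploiting that the two flow directions are transverse (again because the slopes differ, which is where $a\neq0$ enters decisively). Then approximate controllability plus LARC (Proposition \ref{oneinput1D} gives LARC here since $a^2+c^2\neq0$) and openness promotes this to exact controllability in the interior via the standard argument (\cite[Proposition 3.2.4]{FW}), and maximality makes it a genuine control set — but the statement only asks for ``controllable,'' so I would aim directly at: for all $\mathbf{v},\mathbf{w}\in\mathcal{C}(\omega_1,\omega_2)$, $\mathbf{w}\in\OC^+(\mathbf{v})$.

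**Main obstacle.**

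The delicate point is the interplay between the two functions $F_{\omega_1}$ and $F_{\omega_2}$: when I flow with a control $\omega$ that is neither $\omega_1$ nor $\omega_2$ (which I will need, e.g. $\omega=0$, to move ``across'' the wedge rather than along it), \emph{both} $F_{\omega_i}$ change in a coupled way, and I must control their signs carefully via (\ref{twocontrol}) and (\ref{retas}) to guarantee the trajectory does not exit the wedge prematurely. In particular, showing that one can get from a point near one boundary line to a point near the other — crossing the interior — without leaving $\mathcal{C}(\omega_1,\omega_2)$, and doing so in a way that also controls the position along the lines, is the crux; I expect this to require a two- or three-segment concatenation ($\omega_1$, then $0$ or an intermediate control, then $\omega_2$) with an explicit estimate using (\ref{twocontrol}) showing the affine-in-$\tau$ term has the sign that keeps us inside. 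The backward-invariance in part (1) is comparatively routine once the sign bookkeeping via (\ref{retas}) is set up; the real work is part (2)'s transitivity of reachability inside the wedge.
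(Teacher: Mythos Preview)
Your plan has genuine gaps in both parts.

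For part (1), the ``cleaner route'' you prefer for $\CC^-$ does not work: the half-plane $\{F_{\omega^-}<0\}$ is backward-invariant under the constant control $\omega^-$, but \emph{not} under an arbitrary $\omega\in\Omega$; so intersecting two such half-planes does not give a set invariant under all admissible controls. The paper's argument instead first observes that, since $a>0$, the map $\omega\mapsto F_\omega(\mathbf{v})=a\omega^2+\lambda(as+c)\omega+\lambda^2t$ is convex in $\omega$, so $F_{\omega^\pm}(\mathbf{v})<0$ forces $F_\omega(\mathbf{v})<0$ for \emph{every} $\omega\in\Omega$; one then studies $\psi(\tau)=F_\omega(\varphi(\tau,\mathbf{v},\widehat\omega))$ for arbitrary $\omega,\widehat\omega\in\Omega$ via (\ref{retas}) and (\ref{twocontrol}) and shows $\psi(\tau)<0$ for $\tau<0$ by a case split on the sign of $\widehat\omega(\omega-\widehat\omega)$. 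For $\CC^+$, your order-preservation observation is valid only for two solutions under the \emph{same} constant control, whereas the boundary of $\CC^+$ consists of the $\omega^\pm$-trajectories through $\mathbf{v}_a$ and you must compare these against backward trajectories under a different $\omega$. The paper handles this by first using your same-control ordering to reduce to the backward $\omega$-orbits from the apex $\mathbf{v}_a$ itself, and then checking via (\ref{solution}) that $\varphi_2(\tau_1,\mathbf{v}_a,\omega)>\varphi_2(\tau_2,\mathbf{v}_a,\omega^i)$ whenever $\tau_1<0<\tau_2$ with $\tau_1\omega=\tau_2\omega^i$ --- a genuine cross-control inequality that reduces to the strict monotonicity of $\tau\mapsto\tau^{-2}(\rme^{\lambda\tau}-\lambda\tau-1)$.

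For part (2), your framing of the obstacle --- keeping the trajectory inside $\CC(\omega_1,\omega_2)$ --- is the wrong one, and the sign-preservation you hope for fails. Writing $\gamma_1(\tau)=F_{\omega_2}(\varphi(\tau,\mathbf{v}_1,\omega_1))$ for $\mathbf{v}_1$ with $F_{\omega_1}(\mathbf{v}_1)>0$, $F_{\omega_2}(\mathbf{v}_1)<0$, relations (\ref{retas}) and (\ref{twocontrol}) give $\gamma_1''(\tau)=\lambda^2\rme^{\lambda\tau}F_{\omega_1}(\mathbf{v}_1)>0$, so $\gamma_1$ is strictly convex with $\gamma_1(0)<0$ and $\gamma_1(\tau)\to+\infty$ as $\tau\to\pm\infty$; hence it changes sign exactly once in positive time and once in negative time. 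The paper uses exactly these crossings: the $\omega_1$-trajectory through $\mathbf{v}_1$ leaves the wedge across $F_{\omega_2}^{-1}(0)$ and, because it asymptotes $F_{\omega_1}^{-1}(0)$ in forward time, it intersects the $\omega_2$-trajectory through $\mathbf{v}_2$ (which asymptotes $F_{\omega_2}^{-1}(0)$ and crosses $F_{\omega_1}^{-1}(0)$) once with $\tau_1>0>\tau_2$ and once with $\rho_1<0<\rho_2$, producing a closed orbit through both points. Your alternation/transversality sketch does not capture this mechanism, and the ``stay inside the wedge'' estimate you identify as the crux is not available.
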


\begin{proof} 
(1)  Note that, for fixed $\mathbf{v}\in\mathbb{R}^2$, the map $\omega\mapsto F_{\omega}(\mathbf{v})$ is a polynomial with a maximum degree equal to two. Since we are assuming $a\geq 0$ we have that 
$$ F_{\omega^-}(\mathbf{v})<0\;\;\mbox{ and }\;\; F_{\omega^+}(\mathbf{v})<0\;\;\iff\;\; F_{\omega}(\mathbf{v})<0, \;\;\forall\omega\in\Omega.$$

Let $\omega, \widehat{\omega}\in\Omega$ and define the function
$$\psi:\R\rightarrow\R, \hspace{1cm} \psi(\tau)= F_{\omega}(\varphi(\tau, \mathbf{v}, \widehat{\omega})).$$
In order to show the invariance of $\mathcal{C}^-$ in negative time, it is enough to show that $\psi(\tau)<0$ if  $\tau<0$. However, relations (\ref{retas}) and (\ref{twocontrol}), allow us to rewrite $\psi$ as
$$\psi(\tau)= F_{\omega}(\varphi(\tau, \mathbf{v}, \widehat{\omega}))-F_{\omega_2}(\varphi(\tau, \mathbf{v}, \widehat{\omega}))+F_{\omega_2}(\varphi(\tau, \mathbf{v}, \widehat{\omega}))$$
$$=a\widehat{\omega}(\omega-\widehat{\omega})\lambda \tau+F_{\omega}(\mathbf{v})+(\rme^{\lambda\tau}-1)F_{\widehat{\omega}}(\mathbf{v}).$$

Since,
$$\widehat{\omega}(\omega-\widehat{\omega})\leq 0\hspace{.5cm}\implies\hspace{.5cm} \psi(\tau)<0, \hspace{.5cm}\mbox{ for }\hspace{.5cm}\tau<0,$$
we can assume, w.l.o.g., that $\widehat{\omega}(\omega-\widehat{\omega})>0$. On the other hand, if $a\widehat{\omega}(\omega-\widehat{\omega})+F_{\widehat{\omega}}(\mathbf{v})>0,$
we get that
$$a\widehat{\omega}(\omega-\widehat{\omega})+F_{\widehat{\omega}}(\mathbf{v})=\underbrace{\lambda^2t}_{F_0(\mathbf{v})<0}+\;\widehat{\omega}(\lambda(as+c)+a\omega)\implies \widehat{\omega}(\lambda(as+c)+a\omega)>0,$$
and hence,
$$0>F_{\omega}(\mathbf{v})=a\widehat{\omega}(\omega-\widehat{\omega})+F_{\widehat{\omega}}(\mathbf{v})+(\omega-\widehat{\omega})(\lambda(as+c)+a\omega)$$
$$=a\widehat{\omega}(\omega-\widehat{\omega})+F_{\widehat{\omega}}(\mathbf{v})+\frac{1}{\widehat{\omega}^2}\Bigl[\widehat{\omega}(\omega-\widehat{\omega})\Bigr]\Bigl[\widehat{\omega}(\lambda(as+c)+a\omega)\Bigr]>0,$$
which is absurd. Therefore, $a\widehat{\omega}(\omega-\widehat{\omega})+F_{\widehat{\omega}}(\mathbf{v})\leq 0$ and 
$$\psi(\tau)=a\widehat{\omega}(\omega-\widehat{\omega})\lambda \tau+F_{\omega}(\mathbf{v})+(\rme^{\lambda\tau}-1)F_{\widehat{\omega}}(\mathbf{v})$$
$$
=\left(a\widehat{\omega}(\omega-\widehat{\omega})+F_{\widehat{\omega}}(\mathbf{v})\right)\lambda\tau+F_{\omega}(\mathbf{v})+(\rme^{\lambda\tau}-\lambda\tau-1)F_{\widehat{\omega}}(\mathbf{v})<0, \;\;\;\mbox{ if }\;\;\;\tau<0,$$
showing the invariance in negative-time of $\CC^-$.

Let us now show the invariance of $\mathcal{C}^+$ in negative time. Note first that, by the very definition, 
$$(s, t)\in\mathcal{C}^+\;\;\;\implies\;\;\; \{(s, t+\rho), \rho \geq 0\}\subset\CC^+,$$
and hence,
$$\varphi(\tau, (s, t), 0)=(s, \rme^{\lambda \tau}t)\in\{(s, t+\rho), \rho\geq 0\}\;\;\;\mbox{ when }\;\;\;\tau<0,$$
showing that $\varphi(\tau, \CC^+, 0)\subset\CC^+$. On the other hand, the expression of the first coordinate of the solutions of the system together with equation (\ref{solution}) imply that, for any $\mathbf{v}\in\CC^+$, the curve $\tau\in\R\mapsto\varphi(\tau, \mathbf{v}, \omega)\in\R^2$ intersects $-c/a\times(0, +\infty)$ at exactly one point. Since solutions of ODEs are either parallel or coincident, in order to show the invariance of $\CC^+$, it is enough to show that the curves $\tau\in(-\infty, 0)\mapsto\varphi(\tau, \mathbf{v}_a, \omega)$ for $\omega\in\Omega$ with $\omega\neq 0$ do not leave $\mathcal{C}^+$.

Let then $\omega\in\Omega$ with $\omega\neq 0$ and $\tau_1<0$. There exists $\tau_2>0$ such that $\tau_1\omega=\tau_2\omega^i$, where
$$\omega^i=\left\{\begin{array}{ll}
    \omega^+, & \omega<0 \\
     \omega^- ,& \omega>0
\end{array}\right.,$$
Therefore,
\begin{align*}
 \varphi_2(\tau_1, \mathbf{v}_a, \omega)&=\frac{a\omega^2}{\lambda^2}(\rme^{\lambda\tau_1}-\lambda\tau_1-1)=\frac{a(\tau_1\omega)^2}{\lambda^2}\frac{1}{\tau_1^2}(\rme^{\lambda\tau_1}-\lambda\tau_1-1) \\
 &>\frac{a(\tau_2\omega^i)^2}{\lambda^2}\frac{1}{\tau_2^2}(\rme^{\lambda\tau_2}-\lambda\tau_2-1)=\varphi_2(\tau_2, \mathbf{v}_a, \omega^i)
\end{align*}
where, for the inequality, we used that the function
$$f:\R\rightarrow\R, \hspace{1cm} f(\tau)=\left\{\begin{array}{ll}
   \frac{1}{\tau^2}(\rme^{\lambda\tau}-\lambda\tau-1),  &  \tau\neq 0\\
    \frac{\lambda^2}{2}, & \tau=0
    \end{array}\right.,$$
is strictly decreasing when $\lambda<0$. As a consequence, $\varphi(\tau, \CC^+, \omega)\subset\CC^+$ for any $\tau<0$, showing the result.

\hspace{1.1cm} (2) In order to show this item, it is enough to show that for any $\mathbf{v}_{1}, \mathbf{v}_{2}\in\mathbb{R}^2$ satisfying 
$$F_{\omega_1}(\mathbf{v}_{1})>0, \hspace{.5cm} F_{\omega_2}(\mathbf{v}_{1})<0\hspace{.5cm}\mbox{ and }\hspace{.5cm} F_{\omega_1}(\mathbf{v}_{2})<0, \hspace{.5cm}F_{\omega_2}(\mathbf{v}_{2})>0,$$
there exists a closed orbit passing by $\mathbf{v}_{1}$ and $\mathbf{v}_{2}$.

Define the curve
$$\gamma_1:\mathbb{R}\rightarrow\mathbb{R},\hspace{1cm}\gamma_1(\tau)=F_{\omega_2}(\varphi(\tau, \mathbf{v}_{1}, \omega_1)).$$
As in the previous item, we have that 
$$\gamma_1(\tau)=a\omega_1(\omega_2-\omega_1)\lambda\tau+F_{\omega_2}(\mathbf{v}_{1})+(\rme^{\lambda\tau}-1)F_{\omega_1}(\mathbf{v}_{1}).$$
Derivation gives us that 
$$\gamma_1'(\tau)=\lambda[a\omega_1(\omega_2-\omega_1)+\rme^{\lambda\tau}F_{\omega_1}(\mathbf{v}_{1})]\hspace{.5cm}\mbox{ and }\hspace{.5cm}\gamma''(\tau)=\lambda^2\rme^{\lambda\tau}F_{\omega_1}(\mathbf{v}_{1}),$$
showing that $\gamma_1'$ is strictly increasing. Since, $F_{\omega_1}(\mathbf{v}_{1})>0$ and $\omega_1(\omega_2-\omega_1)<0$ we conclude that $\gamma'$ has exactly one zero. As a consequence, 
$$\lim_{\lambda\tau\rightarrow\pm\infty}\gamma(\tau)=+\infty\hspace{.5cm}\mbox{ and }\hspace{.5cm}\gamma_1(0)=F_{\omega_2}(\mathbf{v}_{1})<0,$$
imply that the curve $\tau\in\mathbb{R}\mapsto\varphi(\tau, \mathbf{v}_{1}, \omega_1)$ crosses the line $F_{\omega_2}^{-1}(0)$ exactly two times, one in positive time and one in negative time  (see Figure \ref{fig3}). A similar analysis of the curve $$\gamma_2:\mathbb{R}\rightarrow\mathbb{R},\hspace{1cm}\gamma_2(\tau)=F_{\omega_1}(\varphi(\tau, \mathbf{v}_{2}, \omega_2)),$$
allows us to conclude the same for the curve $\tau\in\mathbb{R}\mapsto\varphi(\tau, \mathbf{v}_{2}, \omega_2)$ and the line $F_{\omega_1}^{-1}(0)$.

Since for $i=1, 2$ the line $F_{\omega_i}^{-1}(0)$ is an asymptote to the curve $\tau\mapsto \varphi(\tau, \mathbf{v}_i, \omega_i)$ as $\tau\rightarrow+\infty$, the previous analysis imply the existence of real numbers $\tau_1, \tau_2, \rho_1, \rho_2$ satisfying 
$$\tau_2<0<\tau_1\hspace{.5cm}\mbox{ and }\hspace{.5cm} \rho_1<0<\rho_2$$
and 
$$\varphi(\tau_1, \mathbf{v}_{1}, \omega_1)=\varphi(\tau_2, \mathbf{v}_{2}, \omega_2)\hspace{.5cm}\mbox{ and }\hspace{.5cm}\varphi(\rho_1, \mathbf{v}_{1}, \omega_1)=\varphi(\rho_2, \mathbf{v}_{2}, \omega_2).$$
 Therefore, the piecewise constant functions
$$\omega_{12}(\tau)=\left\{\begin{array}{cc}
   \omega_1,  &  \tau\in [0, \tau_1]\\
    \omega_2, &  \tau\in (\tau_1, \tau_1-\tau_2] 
\end{array}\right. \hspace{.5cm}\mbox{ and }\hspace{.5cm}\omega_{21}(\tau)=\left\{\begin{array}{cc}
   \omega_2,  &  \tau\in [0, \rho_2]\\
    \omega_1, &  \tau\in (\rho_2, \rho_2-\rho_1] 
\end{array}\right.$$
satisfies,
\begin{align*}
\varphi(\tau_1-\tau_2, \mathbf{v}_{1}, \omega_{12})&=\varphi(-\tau_2, \varphi (\tau_1, \mathbf{v}_{1}, \omega_1), \omega_2)=\mathbf{v}_{2}  \\
\varphi(\rho_2-\rho_1, \mathbf{v}_{2}, \omega_{21})&=\varphi(-\rho_1, \varphi(\rho_2, \mathbf{v}_{2}, \omega_2), \omega_1)=\mathbf{v}_{1},
\end{align*}
which assures the existence of the periodic orbit between $\mathbf{v}_{1}$ and $\mathbf{v}_{2}$ as stated (see Figure \ref{fig3}).

\begin{figure}[H] 
	\centering
	\includegraphics[scale=.38]{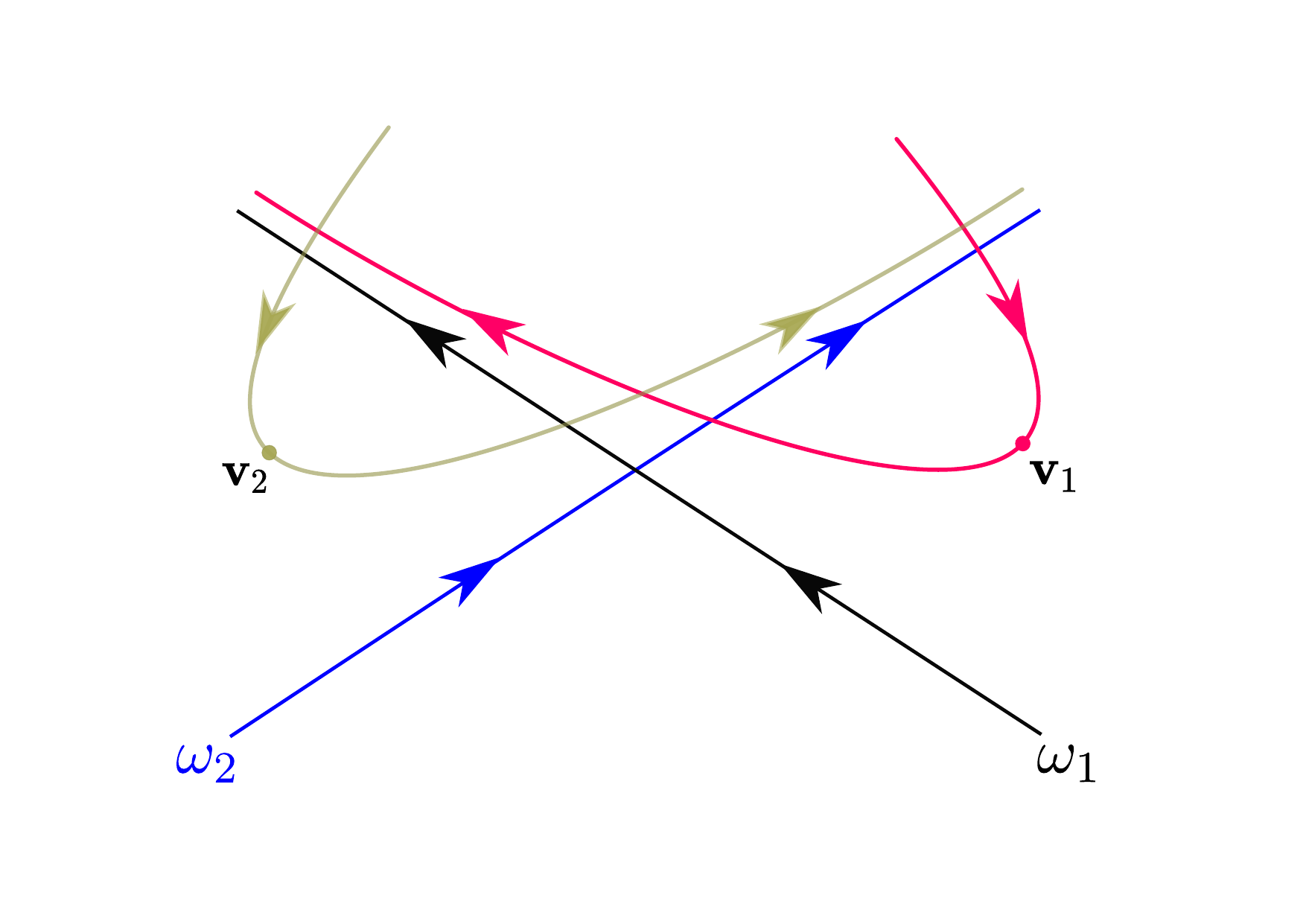}
 \vspace*{-9mm}
	\caption{A periodic orbit 
 through $\mathbf{v}_{1}$ and $\mathbf{v}_{2}$}
	\label{fig3}
\end{figure}
\end{proof}

 \bigskip
 
 We are now in a position to describe the only control set of $\Sigma_{1, 0}$.
 
 \begin{theorem}
     The system $\Sigma_{1, 0}$ admits a unique control set $\CC$ whose closure satisfies:
     \begin{itemize}
         \item[(1)] $a=0$ and $\overline{\CC}=\R\times -\displaystyle{\frac{c}{\lambda}\Omega}$;
         
         \item[(2)] $a>0$ and 
         $\overline{\CC}= \R^2\setminus \left(\CC^+\cup\CC^-\right).$
     \end{itemize}
     In both cases, $\CC$ is closed when $\lambda<0$ and open if $\lambda>0$
     
 \end{theorem}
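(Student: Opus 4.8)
The plan is, via Proposition~\ref{conjugation}, to work throughout with the conjugated normal form $\dot s=\omega,\ \dot t=\lambda t+\omega(c+as)$, keeping the standing reductions $\lambda<0$ and $a\ge 0$, and to treat the cases $a=0$ and $a>0$ separately; the case $\lambda>0$ is recovered at the end as the present system run backwards in time. In each of the two cases I will produce a \emph{closed} set $\DC$ and show: (i) $\inner\DC$ is controllable, hence contained in the interior of some control set $\CC$; (ii) $\DC$ is forward invariant, whence $\CC\subset\DC$ and therefore $\cl\CC=\DC$; (iii) for $\lambda<0$ every boundary point of $\DC$ satisfies the two control-set axioms relative to $\DC$, so in fact $\CC=\DC$; and (iv) $\R^2\setminus\DC=\CC^+\cup\CC^-$ contains no control set, giving uniqueness.

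\textbf{The case $a>0$.} Here $\DC:=\R^2\setminus(\CC^+\cup\CC^-)$, which is closed since $\CC^\pm$ are open. For (i) one verifies directly from the definitions that $\inner\DC=\bigcup_{\omega_1<0<\omega_2}\CC(\omega_1,\omega_2)$; by Lemma~\ref{lema}(2) each cell $\CC(\omega_1,\omega_2)$ is controllable, and since this union is open and connected it is a single controllable region, so $\inner\DC\subset\inner\CC$ for a control set $\CC$ and already $\DC=\cl(\inner\DC)\subset\cl\OC^+(x)$ for every $x\in\inner\DC$. For (ii), Lemma~\ref{lema}(1) gives that $\CC^\pm$ are invariant in negative time, so their complements -- and hence $\DC$ -- are forward invariant; taking $x\in\inner\CC\cap\DC$ we get $\cl\OC^+(x)\subset\DC$, and then the control-set property $\CC\subset\cl\OC^+(x)$ forces $\CC\subset\DC$, so $\cl\CC=\DC$. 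For (iii), $\partial\DC$ consists of pieces of the asymptote lines $F_{\omega^\pm}^{-1}(0)$ and of the branches $\tau\ge 0\mapsto\varphi(\tau,\mathbf{v}_a,\omega^\pm)$; on each such piece the corresponding constant control keeps the trajectory on that piece -- by relation~(\ref{retas}) on a line, by the flow property on a branch -- hence in $\DC$, which is the first axiom, and a short computation with~(\ref{twocontrol}) shows that from any boundary point one enters $\inner\DC$ in arbitrarily small positive time, so with (i) $\DC\subset\cl\OC^+(\mathbf{v})$ for all $\mathbf{v}\in\DC$; thus $\DC$ is itself a control set and $\CC=\DC$. For (iv), negative invariance forces any control set meeting $\CC^-$ or $\CC^+$ to lie inside it; but along a forward trajectory confined to $\CC^-$ the observable $F_0(\varphi(\tau))=\lambda^2 t(\tau)$ is strictly monotone, since $\tfrac{d}{d\tau}F_0(\varphi)=\lambda\bigl(F_\omega(\varphi)-a\omega^2\bigr)$ has fixed sign on $\CC^-$ because $F_\omega<0$ there, and this precludes the recurrence a control set needs; an analogous observable handles $\CC^+$.

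\textbf{The case $a=0$.} Now $\mathbf{v}_a$ is undefined, $\CC^\pm$ degenerate to the half-planes $\{F_{\omega^-}<0\}$ and $\{F_{\omega^+}>0\}$, and $\DC=\R^2\setminus(\CC^+\cup\CC^-)$ is precisely the closed strip $\R\times\bigl(-\tfrac c\lambda\,\Omega\bigr)$. The dynamics decouple favourably: $\dot t=\lambda t+c\omega$ is a scalar system controllable onto the open interval $-\tfrac c\lambda\,\inner\Omega$, while $\dot s=\omega$ is a pure integrator. Given $\mathbf{v}_1,\mathbf{v}_2$ in the open strip, I first choose a control steering $t_1$ to $t_2$ inside that interval and then, using the freedom among all such controls -- letting $t$ make a longer excursion before returning -- arrange $\int\omega\,d\tau=s_2-s_1$, an adjustment made rigorous by a standard intermediate-value argument; this proves the open strip controllable. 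Steps (ii)--(iv) then run as before: $\DC$ is forward invariant; on the lines $t=-\tfrac c\lambda\omega^\mp$ the constant control $\omega^\mp$ fixes the $t$-coordinate while interior points approach these lines asymptotically, so for $\lambda<0$ the whole closed strip is a control set; and in $\CC^\pm$ the sign of $\dot t$ is fixed, so no control set sits there. Hence $\cl\CC=\R\times\bigl(-\tfrac c\lambda\,\Omega\bigr)$.

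\textbf{The case $\lambda>0$ and the main obstacle.} For $\lambda>0$ the same scheme applies with time reversed: the lines $F_\omega^{-1}(0)$ become asymptotes as $\tau\to-\infty$, the sets $\CC^\pm$ are forward invariant rather than negatively invariant, and, crucially, from interior points one can no longer reach $\partial\DC$ in positive time, so $\CC=\inner\DC$ is open while still $\cl\CC=\DC$. The main obstacle throughout is step (i) in the case $a>0$: establishing that $\inner\DC$ is exactly the union of the controllable cells $\CC(\omega_1,\omega_2)$ and that this union is a single controllable region -- that is, controlling how the cells fit together inside the geometry of $\CC^\pm$ depicted in Figure~\ref{fig1} -- together with the boundary analysis of step (iii) that decides closed versus open. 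Everything else is supplied by Lemma~\ref{lema} or by the elementary identities~(\ref{retas}) and~(\ref{twocontrol}).
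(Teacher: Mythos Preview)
Your overall scheme is sound and close to the paper's, but step~(i) in the case $a>0$ fails as written: the identity $\inner\DC=\bigcup_{\omega_1<0<\omega_2}\CC(\omega_1,\omega_2)$ is false, and this is not a matter of ``fitting the cells together'' but a genuine inclusion failure. Take $\tau_0>2/|\lambda|$ and let $(s,t)$ lie just below the curve $\varphi(\cdot,\mathbf{v}_a,\omega^+)$ at parameter~$\tau_0$, so $as+c=a\tau_0\omega^+>0$. The vertex of the upward parabola $\omega\mapsto F_\omega(s,t)=a\omega^2+\lambda(as+c)\omega+\lambda^2 t$ sits at $\omega^*=-\lambda\tau_0\omega^+/2>\omega^+$, so on $\Omega$ this parabola is decreasing and its minimum is $F_{\omega^+}(s,t)$; by relation~(\ref{retas}) one has $F_{\omega^+}(\varphi(\tau_0,\mathbf{v}_a,\omega^+))=\rme^{\lambda\tau_0}a(\omega^+)^2>0$, hence $F_{\omega^+}(s,t)>0$ for $(s,t)$ slightly below the curve. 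Thus $F_\omega(s,t)>0$ for \emph{every} $\omega\in\Omega$, so $(s,t)$ lies in no $\CC(\omega_1,\omega_2)$, yet it belongs to $\inner\DC$ (it is below the curve from $\mathbf{v}_a$ and above both lines $F_{\omega^\pm}^{-1}(0)$). The obstacle you flag at the end is therefore real and cannot be resolved by Lemma~\ref{lema}(2) alone.

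The paper's remedy is a different decomposition: $\inner\DC=\CC(\omega^-,\omega^+)\cup(\CC_1\cap\CC_2)$, where $\CC_1\cap\CC_2$ is exactly the ``lens'' region above both lines and below the curves from $\mathbf{v}_a$ that your union misses. The paper then argues separately that every point of $\CC_1\cap\CC_2$ can be both steered into and reached from $\CC(\omega^-,\omega^+)$ in positive time. For the forward direction it uses that the lines $F_\omega^{-1}(0)$ are asymptotes as $\tau\to+\infty$ and that far enough out these lines lie inside $\CC(\omega^-,\omega^+)$; for the backward direction it traces the $\omega^\pm$-trajectory through the target point back to the vertical $\{s=-c/a\}$, picks an intermediate control $\omega_*$ of opposite sign whose line $F_{\omega_*}^{-1}(0)$ the trajectory crosses, and rides that line in from $\CC(\omega^-,\omega^+)$. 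Your steps~(ii)--(iv), the monotone-observable argument on $\CC^-$, and the $a=0$ case are essentially fine, but without this separate treatment of $\CC_1\cap\CC_2$ the proof for $a>0$ does not close.
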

 
 \begin{proof}
     (1) Since, by \cite[Lemma 5]{AEO}, the subset $\displaystyle{-\frac{c}{\lambda}\Omega}$ is the unique control set of the control system
$$\dot{t}=\lambda t+c\omega, \hspace{1cm}\omega\in\Omega,$$
by Proposition \ref{conjugation}, it is enough to show that the fiber $\R\times\{0\}$ is controllable. For any $\mathbf{v}_0=(s_0, 0), \mathbf{v}_1=(s_1, 0)\in\R\times\{0\}$ there exists, by the continuous dependence of the initial conditions, $\omega, \omega_0, \omega_1\in\inner\Omega$, $\tau_0, \tau_1>0$ and $\tilde{s}_0, \tilde{s}_1\in\R$ satisfying
$$\varphi_2(\tau_0, \mathbf{v}_0, \omega_0)=\left(\tilde{s}_0, -\frac{c}{\lambda}\omega, \right),\hspace{.5cm}\varphi_2\left(\tau_1, \left(\tilde{s}_1, -\frac{c}{\lambda}\omega\right), \omega_1\right)=\mathbf{v}_1, \hspace{.5cm}\mbox{ and }\hspace{.5cm}(\tilde{s}_1-\tilde{s}_0)\omega>0.$$
 A trajectory of $\Sigma_{1, 0}$ connecting $\mathbf{v}_0$ to $\mathbf{v}_1$ is constructed by concatenation as:

\begin{enumerate}
    \item[(i)] With time, $\tau_0>0$ and control $\omega_0\in \Omega$ go from $\mathbf{v}_0$ to $\left(\tilde{s}_0, -\displaystyle{\frac{c}{\lambda}}\omega\right)$;

    \item[(ii)] With time $\tau=\frac{\tilde{s}_1-\tilde{s}_0}{\omega}>0$ and control $\omega$ go from $\left(\tilde{s}_0, -\displaystyle{\frac{c}{\lambda}}\omega\right)$ to 
    $$\varphi\left(\tau, \left(\tilde{s}_0, -\displaystyle{\frac{c}{\lambda}}\omega\right), \omega\right)=\left(\tilde{s}_0+\tau\omega, -\frac{c}{\lambda}\omega\right)=\left(\tilde{s}_1, -\frac{c}{\lambda}\omega\right);$$

    \item[(iii)] With time $\tau_1>0$ and control $\omega_1\in \Omega$ go from $\left(\tilde{s}_1, -\displaystyle{\frac{c}{\lambda}}\omega\right)$ to $\mathbf{v}_1$.
\end{enumerate}
 
By the arbitrariness of $\mathbf{v}_0$ and $\mathbf{v}_1$ we get the controllability of $\R\times\{0\}$, showing (1).

\bigskip

\hspace{1.1cm}(2) By Lemma \ref{lema}, if $\lambda<0$, the set $\CC^+\cup\CC^-$ is invariant in negative time and hence, $\R^2\setminus(\CC^+\cup\CC^-)$ is positively invariant. Therefore, if we show that controllability holds in $\inner(\R^2\setminus(\CC^+\cup\CC^-))$ the result follows.

However, 
$$\inner(\R^2\setminus(\CC^+\cup\CC^-))=\CC_1\cap\CC_2\cup\CC(\omega^-, \omega^+),$$ 
where
$$\CC_1:=\Bigl\{(s, t)\in\R^2~:~\exists\tau\geq 0 \mbox{ and }\omega\in\{\omega^-, \omega^+\} \mbox{ with } s=\varphi_1(\tau, \mathbf{v}_a, \omega)\mbox{ and }t<\varphi_2(\tau, \mathbf{v}_a, \omega)\Bigr\},$$
and 
$$\CC_2:=\{(s, t)\in\R^2~:~ F_{\omega^+}(s, t)\geq 0\;\mbox{ and }\;F_{\omega^-}(s, t)\geq 0\}.$$
Since by Lemma \ref{lema} controllability holds in $\CC(\omega^-, \omega^+)$, it is enough to show that, for any point in $\CC_1\cap\CC_2$, there exists a trajectory starting and finishing in $\CC(\omega^-, \omega^+)$.

Let us first note that the lines $F_{\omega}^{-1}(0)$ can be parametrized by the curve
$$s\in\R\mapsto \mathbf{v}_{\omega}(s)=\left(s, -\frac{\omega}{\lambda^2}(\lambda(as+c)+a\omega)\right)\in\R^2,$$
and hence, for $\omega_1, \omega_2\in\Omega$, we get that 
$$F_{\omega_1}(\mathbf{v}_{\omega_2}(s))=(\omega_1-\omega_2)(\lambda(as+c)+a(\omega_1+\omega_2)).$$
As a consequence, for any $\omega\in\Omega$, there exists $s_{\omega}>0$ such that 
    $$\mathbf{v}_{\omega}(s)\in\CC(\omega^-, \omega^+),\;\;\;\mbox{ for }\;\;\;|s|\geq s_{\omega}.$$
    Since the lines $F_{\omega}^{-1}(0)$ are asymptotes to the curve $\tau\in\R\mapsto\varphi(\tau, \mathbf{v}, \omega)$, we get that 
     $$\varphi(\tau_0, \mathbf{v}, \omega)\in\CC(\omega^-, \omega^+), \;\mbox{ for some }\tau_0>0,$$
     showing that we can reach $\CC(\omega^-, \omega^+)$ from any point in $\CC_1\cap\CC_2$ (see Figure \ref{fig4}).
     
    Next, let us construct, for a given $\mathbf{v}\in\CC_1\cap\CC_2$, an orbit starting in $\CC(\omega^-, \omega^+)$ and finishing on $\mathbf{v}$. 
    Since solutions of ODEs are parallel or coincident, there exists $\tau_1\geq 0$ and $t<0$ such that 
    $$\varphi(\tau, (-c/a, t), \omega^i)=\mathbf{v}, \;\;\;\mbox{ where }\;\;\;\omega^i=\left\{\begin{array}{ll}
        \omega^- ,&  as+c\leq 0 \\
        \omega^+ ,&  as+c> 0
    \end{array}\right.$$
    On the other hand, the fact that
    $$F_{\omega}^{-1}(0)\cap\{s=-c/a\}=\left\{\left(-c/a,-\frac{\omega^2}{\lambda^2}\right)\right\},$$
    implies the existence of $\omega_*\in\Omega$ such that 
    $$\mathbf{v}_{1}=F_{\omega_*}^{-1}(0)\cap \varphi([0, \tau_1], (-c/a, t), \omega^i)\}, \;\;\;\mbox{ with }\;\;\;\omega_*\omega^i<0.$$
    By the previous discussion, we can take $\mathbf{v}_{2}\in F_{\omega_*}^{-1}(0)\cap\CC(\omega^-, \omega^+)$ satisfying,  
    $$\mathbf{v}_{1}=\varphi(\tau_2, \mathbf{v}_{2}, \omega_*), \hspace{.5cm}\mbox{ for some }\tau_2>0.$$
    Therefore, 
    $$\varphi(\tau_1-\rho, \varphi(\tau_2, \mathbf{v}_{2}, \omega_*), \omega^i)=\mathbf{v},\;\;\;\mbox{ where }\;\;\;\mathbf{v}_{1}=\varphi(\rho, (-c/a, t), \omega^i), \;\;\;\rho\in [0, \tau_1],$$
    showing that we can reach any $\mathbf{v}\in\CC_1\cap\CC_2$ from a point in $\CC(\omega^-, \omega^+)$ (Figure \ref{fig4}) and concluding the proof.

\begin{figure}[H] 
	\centering
	\includegraphics[scale=.38]{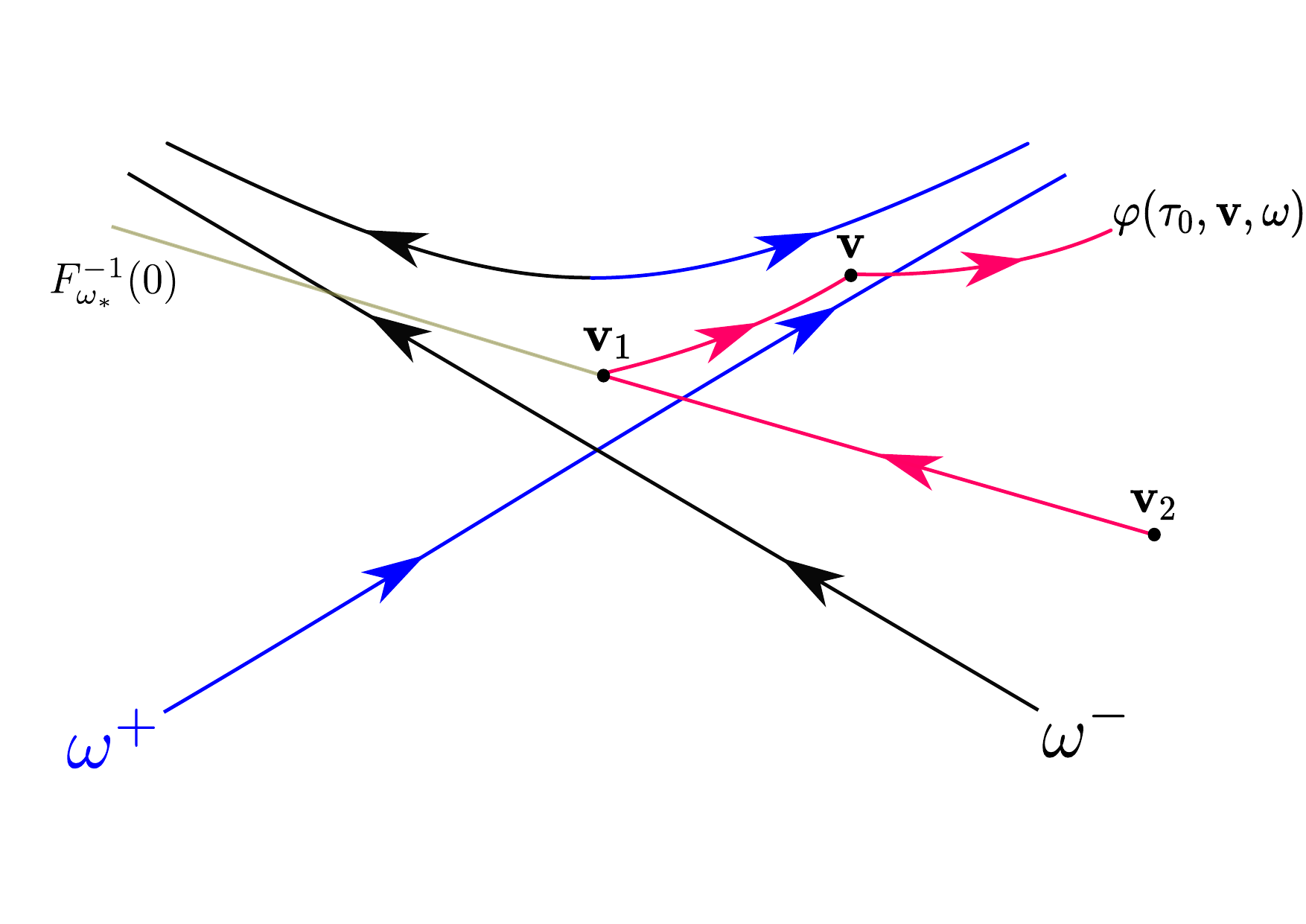}
 \vspace*{-9mm}
	\caption{Orbit through a point  $\mathbf{v}\in \CC_1\cap\CC_2$ starting and ending in $\CC(\omega^-, \omega^+)$.}
	\label{fig4}
\end{figure}
 
\end{proof}

Finally, we explain the last subcase of the case $\alpha=0$.
\subsubsection{The subcase $\beta\neq 0, \lambda+\beta=0$:}

In this case, under the LARC, the diffeomorphism
  $$f:\mathbb{R}^2\rightarrow \R^2,\hspace{1cm}f(s, t)=\left(-\frac{\beta s}{b}, -\frac{\beta}{b}s+\frac{\gamma}{b}t\right),$$
  conjugates our initial system to a system of the form
   \begin{flalign*}
	 	&&\left(\Sigma_{1, 0}\right):\left\{
	 	\begin{array}{l}
	 		\dot{s}=\beta(s-\omega)\\ \dot{t}=(a\omega+\gamma)s\end{array}\right. &&
	 \end{flalign*}	
where $\omega  \in \Omega$. It satisfies LARC if and only if $a^2+c^2\neq 0$. \\
As previously, let us assume w.l.o.g. is that $\beta<0$. In this case, the fact that $\Omega$ is the unique control set of the control system (see, for instance, \cite[Lemma 5]{AEO})
$$\dot{s}=\beta(s-\omega), \hspace{1cm}\omega\in\Omega,$$
implies that any control set of $\Sigma_{1, 0}$ has to be inside the subset $\Omega\times\R$. Moreover, since the first component of the solutions of $\Sigma_{1, 0}$ for constant control is given by
 $$\varphi_1(\tau, \mathbf{v}_0, \omega)=\rme^{\beta \tau}(s_0-\omega)+\omega,\hspace{.5cm}\mathbf{v}_0:=(s_0, t_0)$$
 it holds that 
$$ \varphi_{\tau, \omega}(\Omega\times\R)\subset \Omega\times\R, \hspace{.5cm} \forall \tau\geq 0.$$

\begin{theorem}
\label{equilibria}
    For the previous system, it holds that:
\begin{itemize}
    \item[(1)]$\gamma\neq 0$ and $\CC=\Omega\times\R$ is the only control set of $\Sigma_{1, 0}$;

    \item[(2)] $\gamma=0$ and $\{(0, t)\}$ are distinct control sets of $\Sigma_{1, 0}$.
\end{itemize}
    
\end{theorem}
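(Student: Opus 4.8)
We keep the reductions made just above the statement: the (conjugated) system is $\dot s=\beta(s-\omega)$, $\dot t=(a\omega+\gamma)s$ with $\beta<0$; every control set lies in the positively invariant closed strip $\Omega\times\R$; and for a constant control $\omega$ the solution from $(s_0,t_0)$ has $\varphi_1(\tau)=\rme^{\beta\tau}(s_0-\omega)+\omega$ and $\varphi_2(\tau)=t_0+(a\omega+\gamma)\bigl[\tfrac{\rme^{\beta\tau}-1}{\beta}(s_0-\omega)+\omega\tau\bigr]$. Note that each $(0,t)$ is an equilibrium for the control value $\omega=0$, and that with $\omega=0$ a point $(s_0,t_0)$ is carried, as $\tau\to+\infty$, to $(0,\,t_0-\gamma s_0/\beta)$. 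The plan is: for $\gamma\neq0$, establish approximate controllability on all of $\Omega\times\R$, which together with positive invariance forces $\Omega\times\R$ to be the unique control set; for $\gamma=0$, show each singleton $\{(0,t)\}$ is maximal among sets enjoying the control-set property.

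\textbf{The case $\gamma\neq0$.} It is enough to prove $\cl\,\OC^+(p)=\Omega\times\R$ for every $p\in\Omega\times\R$, and I would do this in three steps. (a) From any $p$, the control $\omega=0$ reaches, in the limit and hence approximately, a point $(0,t^\ast)$ of $\{0\}\times\R$. (b) One can move approximately freely along $\{0\}\times\R$: the quadratic $g(\omega):=(a\omega+\gamma)\omega$ satisfies $g(0)=0$ and $g'(0)=\gamma\neq0$, hence takes both signs on the neighbourhood $\Omega$ of $0$; running a constant $\omega_\star\neq0$ with $g(\omega_\star)$ of a chosen sign for a time $\tau_a$ and then switching to $\omega=0$ returns $s$ to $0$ and changes $t$ by a quantity that varies continuously from $0$ to $\pm\infty$ (sign that of $g(\omega_\star)$) as $\tau_a$ ranges over $[0,\infty)$; so every $(0,t^\ast+\delta)$, $\delta\in\R$, is approximately reachable from $(0,t^\ast)$. (c) From $(0,t')$, the constant control $\omega=s_1$ (for $s_1\in\inner\Omega$; the boundary case follows by perturbing the target) drives $s$ to $s_1$ and changes $t$ by an amount $c_1(s_1,\tau)$ that depends only on the elapsed time; choosing $\tau$ large enough that $|\varphi_1(\tau)-s_1|<\varepsilon$ and then, using (b), the intermediate level $t'=t_1-c_1(s_1,\tau)$, we land within $\varepsilon$ of $(s_1,t_1)$. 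Combining (a)--(c) yields the asserted approximate controllability; since $\Omega\times\R$ is positively invariant and closed it is then a control set, and being contained in every control set it is the only one. (Consistently, when $\gamma\neq0$ the linearization of the system at $(0,t)$ for $\omega=0$ is controllable --- its controllability matrix has determinant $\beta^2\gamma$ --- so $(0,t)$ in fact lies in the interior of this control set.)

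\textbf{The case $\gamma=0$.} Now the LARC forces $a\neq0$; assume $a>0$, the sign $a<0$ being symmetric. Each $(0,t)$ is an equilibrium for $\omega=0$, so $\{(0,t)\}$ trivially satisfies the two defining properties of a control set, and only maximality remains. The key device is the function $V(s,t)=t+\tfrac{a}{2\beta}s^2$, which along any solution (piecewise constant control) satisfies $\dot V=\dot t+\tfrac{a}{\beta}s\dot s=a\omega s+\tfrac{a}{\beta}s\,\beta(s-\omega)=as^2\ge0$, strictly positive wherever $s\neq0$; hence $V$ is non-decreasing along every trajectory, so $\cl\,\OC^+(\mathbf v)\subseteq\{V\ge V(\mathbf v)\}$ for all $\mathbf v$. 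Suppose $\CC\supsetneq\{(0,t)\}$ is a control set and pick $p=(s^\ast,t^\ast)\in\CC$ with $p\neq(0,t)$. Then $p\in\cl\,\OC^+(0,t)$ and $(0,t)\in\cl\,\OC^+(p)$, so $V(p)\ge V(0,t)=t$ and $t\ge V(p)$, forcing $V(p)=t$. If moreover $s^\ast\neq0$: any trajectory from $(0,t)$ ending within a small $\varepsilon$ of $p$ must carry the $s$-coordinate from $0$ to within $\varepsilon$ of $s^\ast$, and since $|\dot s|$ is bounded on $\Omega\times\R$ such a trajectory spends a time bounded below by a positive constant with $|s|$ bounded below by a positive constant; hence the total increase of $V$ along it, $\int as^2\,d\tau$, is at least a positive constant independent of $\varepsilon$, contradicting $V(p)=t=V(0,t)$ and the continuity of $V$. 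So $s^\ast=0$, whence $t^\ast=V(p)=t$ and $p=(0,t)$, a contradiction. Therefore $\{(0,t)\}$ is maximal, i.e.\ a control set, and these singletons are pairwise distinct. (One can further check that $V$ bounds any control set in $t$ from both sides, so that a trajectory confined to one converges to an equilibrium $(0,t_\infty)$; this leads to these singletons being the only control sets.)

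\textbf{Expected main obstacle.} The technical core is the constructive argument for $\gamma\neq0$ --- in particular reconciling the exponential relaxation of $s$ with the linear-in-time drift of $t$ and arranging the intermediate level $t'$ --- whereas the case $\gamma=0$ becomes short once one spots the monotone quantity $V=t+\tfrac{a}{2\beta}s^2$.
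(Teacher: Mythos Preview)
Your proof is correct, and in both parts it takes a route different from the paper's. For part~(1), the paper does not argue approximate controllability on the whole strip; instead it invokes Proposition~\ref{conjugation} for the projection onto the $s$-coordinate (whose control set on $\R$ is $\Omega$) and reduces the problem to showing \emph{exact} controllability on the fiber $\{0\}\times\R$, which it does by a short two-segment concatenation. Your direct (a)--(c) scheme avoids the abstract fiber lemma at the cost of juggling three approximate steps; the paper's route is shorter once the projection lemma is in hand. For part~(2), both arguments rest on a function that is strictly increasing along trajectories with $s\neq 0$, but your choice $V(s,t)=t+\tfrac{a}{2\beta}s^2$, giving $\dot V=as^2$, is strictly simpler than the paper's $F(s,t)=t+\tfrac{a\sigma}{\beta}s-\tfrac{a\sigma^2}{\beta}\ln(s+\sigma)$, whose derivative $\dot F=\tfrac{a(\omega+\sigma)}{s+\sigma}s^2$ requires the auxiliary constant $\sigma>\sup|\Omega|$ and is only defined on the strip. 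The paper then concludes in one line that any control set lies in a single level set of $F$ with $s=0$, hence is a singleton; your maximality argument via a uniform lower bound on $\int a s^2\,d\tau$ is a bit longer but equally valid, and your monotone function has the advantage of being globally defined, which makes the closure argument $\cl\,\OC^+(\mathbf v)\subset\{V\ge V(\mathbf v)\}$ entirely clean.
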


\begin{proof} (1) By Proposition \ref{conjugation}, we only have to show that the fiber $\{0\}\times\R$ is controllable. Moreover, let us assume, w.l.o.g., that $\gamma>0$ since the other possibility is analogous. Let then $\mathbf{v}_0=(0, t_0)$ and $\mathbf{v}_1=(0, t_1)$ satisfy $t_0<t_1$.

Then, 
$$\tau=\frac{t_1-t_0}{\gamma}\hspace{.5cm}\implies\hspace{.5cm}\varphi(\tau, \mathbf{v}_0, 0)=(0, t_0+\gamma\tau)=(0, t_1)=\mathbf{v}_1.$$

On the other hand, by the continuous dependence of the initial conditions, there exists $\omega\in\inner\Omega$ such that 
$(a\omega+\gamma)\omega<0$ and 
$$\varphi_1(\tau_1, \mathbf{v}_1, \omega_1)=(\omega, \tilde{t}_1)\hspace{.5cm}\mbox{ and }\hspace{.5cm}\varphi_1(\tau_2, (\omega, \tilde{t}_0), \omega_2)=\mathbf{v}_0, \hspace{.5cm}\mbox{ with }\hspace{.5cm}\tilde{t}_0<\tilde{t}_1,$$
for some $\omega_1, \omega_2\in\Omega$ and $\tau_1, \tau_2>0$. A trajectory of $\Sigma_{1, 0}$ starting in $\mathbf{v}_1$ and finishing in $\mathbf{v}_0$ is constructed by concatenation as:

\begin{enumerate}
    \item[(i)] With time, $\tau_1>0$ and control $\omega_1\in \Omega$ go from $\mathbf{v}_1$ to $(0, \tilde{t}_1)$;

    \item[(ii)] With time $\tau=\frac{\tilde{t}_0-\tilde{t}_1}{(a\omega+\gamma)\omega}>0$ and control $\omega$ go from $(0, \tilde{t}_1)$ to 
    $$\varphi(\tau, (0, \tilde{t}_1), \omega)=(\omega, \tilde{t}_1+(a\omega+\gamma)\omega\tau)=(\omega, \tilde{t}_0);$$

    \item[(iii)] With time, $\tau_2>0$ and control $\omega_2\in \Omega$ go from $(\omega, \tilde{t}_0)$ to $\mathbf{v}_0$.
\end{enumerate}

(2) If $\gamma=0$, then $a\neq 0$, and we will assume w.l.o.g. that $a>0$, since the other possibility is analogous. Define the function 
$$F:\Omega \times\R\rightarrow\R, \hspace{1cm} F(s, t):=t+\frac{a\sigma}{\beta}s-\frac{a\sigma^2}{\beta}\ln(s+\sigma),$$
where $\sigma>0$ satisfies $\sigma>|\omega|$ for all $\omega\in\Omega$.

Let $\omega\in\Omega$ and $\mathbf{v}_0:=(s_0, t_0)\in\inner\Omega\times\R$ and write $\varphi(\tau, \mathbf{v}_0, \omega)=(s, t)$. Then,
$$\frac{d}{d\tau}F(\varphi(\tau, \mathbf{v}_0, \omega))=\dot{t}+\frac{a\sigma}{\beta}\dot{s}-\frac{a\sigma^2}{\beta}\frac{\dot{s}}{s+\sigma}=a\omega s+\frac{a\sigma}{\beta}\beta(s-\omega)-\frac{a\sigma^2}{\beta}\frac{\beta(s-\omega)}{s+\sigma}$$
$$\frac{a\omega s(s+\sigma)+a\sigma(s-\omega)(s+\sigma)-a\sigma^2(s-\omega)}{s+\sigma}=\frac{a(\omega+\sigma)}{s+\sigma}s^2.$$
By concatenation, we easily conclude that,
$$F(\varphi(\tau, \mathbf{v}_0, {\bf \omega}))>F(\mathbf{v}_0), \hspace{.5cm}\mbox{ if } \hspace{.5cm}\omega\not\equiv 0 \hspace{.5cm}\mbox{ or }\hspace{.5cm}s_0\neq 0,$$
implying that any control set of $\Sigma_{1, 0}$ has to be contained in one of the curves $F^{-1}(c)$ and have the first component equal to zero. Since both properties happen only at exactly one point, we conclude that $\{(0, t)\}$ are the only control sets of $\Sigma_{1, 0}$ inside $\Omega\times\R$, concluding the result.

\end{proof}

\subsection{Controllability in the case $\alpha\neq 0$.}

Let us now analyze the case where $\alpha\neq 0$. By Proposition \ref{oneinput1D}, such conditions imply that $\gamma=0$. Moreover, if $\lambda\neq \beta$, the map
$$f:\mathbb{R}^2\rightarrow \R^2,\hspace{1cm}f(s, t)=\left(s, t+\frac{\alpha}{2(\lambda-\beta)}s^2\right),$$
is a diffeomorphism that conjugates our initial LCS to the LCS 
 \begin{flalign*}
	 	&&\left(\Sigma_{1, 0}\right):\left\{
	 	\begin{array}{l}
	 		\dot{s}=\beta s+\omega b\\
	 		\dot{t}=(\lambda+\beta)t+\omega(c+as)
	 	\end{array}\right.,  &&
	 \end{flalign*}	
  that has no quadratic term. As a consequence, we only have to analyze singular LCSs where $\alpha\neq 0$ and $\lambda=\beta$, since the other possibilities were studied in the previous sections. However, the only singular LCS where $\lambda=\beta$ and $\alpha\neq 0$ is given by
  \begin{flalign*}
	 	&&\left(\Sigma_{1, 0}\right):\left\{
	 	\begin{array}{l}
	 		\dot{s}=\omega b\\
	 		\dot{t}=\alpha s^2+\omega(c+as)
	 	\end{array}\right. &&
	 \end{flalign*}	
  and for such a system, we have the following.

\begin{proposition}
\label{alfanozero}
    If a singular LCS $\Sigma_{1, 0}$ is such that $\alpha\neq 0$ and $\lambda=\beta=0$, then it admits only one-point control sets given by the singularities of the drift.
\end{proposition}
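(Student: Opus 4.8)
The plan is to exhibit a single smooth function on $\R^{2}$ that is monotone along \emph{every} trajectory of $\Sigma_{1,0}$, with derivative vanishing exactly on the singular set of the drift, and then use it to collapse any control set to one singularity. As preliminary bookkeeping: the standing constraint ($\alpha=0$ whenever $\gamma\neq 0$) forces $\gamma=0$ here, and with $\lambda=\beta=0$ the LARC of Proposition \ref{oneinput1D} reads $b^{3}\alpha^{2}\neq 0$, i.e. $b\neq 0$; moreover the induced drift is $\widehat{\XC}_{1,0}(s,t)=(0,\alpha s^{2})$, whose zero set is the line $\{(0,t):t\in\R\}$. Replacing the function below by its negative if necessary, we may assume $\alpha>0$.

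The central step is to consider $G(s,t)=t-\dfrac{c}{b}\,s-\dfrac{a}{2b}\,s^{2}$. Since $\dot s=\omega b$ along any solution $\varphi(\cdot,\mathbf{v},\omega)$, a direct computation gives
\[
\frac{d}{d\tau}\,G\bigl(\varphi(\tau,\mathbf{v},\omega)\bigr)=\dot t-\frac{\dot s}{b}\,(c+as)=\alpha\, s(\tau)^{2}\geq 0
\]
for almost every $\tau$ (legitimate because $\varphi$ is absolutely continuous and $G$ is smooth), with equality iff $s(\tau)=0$. Hence $G$ is nondecreasing along every trajectory, so $\OC^{+}(\mathbf{v})\subset\{G\geq G(\mathbf{v})\}$, and since $\{G\geq G(\mathbf{v})\}$ is closed, $\cl\,\OC^{+}(\mathbf{v})\subset\{G\geq G(\mathbf{v})\}$ as well.

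Now let $\CC$ be any nonempty set satisfying the two defining properties of a control set. If $\mathbf{v}_{1},\mathbf{v}_{2}\in\CC$, property 2 gives $\mathbf{v}_{1}\in\cl\,\OC^{+}(\mathbf{v}_{2})$, hence $G(\mathbf{v}_{1})\geq G(\mathbf{v}_{2})$; by symmetry $G$ is constant on $\CC$, say $\CC\subset\{G=\kappa\}$. Choosing $\mathbf{v}_{0}\in\CC$ and a control $\omega$ with $\varphi(\R^{+},\mathbf{v}_{0},\omega)\subset\CC\subset\{G=\kappa\}$, the map $\tau\mapsto G(\varphi(\tau,\mathbf{v}_{0},\omega))$ is constant, so $\alpha s(\tau)^{2}\equiv 0$, i.e. $s(\tau)\equiv 0$; as $\dot s=\omega b$ with $b\neq 0$ this forces $\omega\equiv 0$ a.e., whence $\dot t=\alpha s^{2}=0$ and the trajectory is the constant $\mathbf{v}_{0}$. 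In particular $\mathbf{v}_{0}=(0,t_{0})$ is a singularity of $\widehat{\XC}_{1,0}$, so $\CC\subset\{s=0\}\cap\{G=\kappa\}$; since $G(0,t)=t$, this intersection is the single point $(0,\kappa)$. Conversely, each singleton $\{(0,t_{0})\}$ trivially meets both defining properties (take $\omega\equiv 0$) and, by what was just shown, cannot be properly contained in another set with those properties, so it is a control set. Therefore the control sets of $\Sigma_{1,0}$ are exactly the points $\{(0,t_{0})\}$, $t_{0}\in\R$, i.e. the singularities of the drift.

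The routine parts are the differentiation of $G\circ\varphi$ and the LARC arithmetic. The only step requiring care is the last one: extracting from property 1 that an invariant trajectory lying in a level set of $G$ must have $s\equiv 0$, hence $\omega\equiv 0$, hence be an equilibrium — this is precisely what simultaneously forces a control set to be a single point and identifies that point with a zero of $\widehat{\XC}_{1,0}$. I anticipate no essential difficulty beyond arranging these implications cleanly.
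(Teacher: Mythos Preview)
Your argument is correct and follows the same overall strategy as the paper: find a smooth function $G$ that is strictly monotone along every nontrivial trajectory, with derivative vanishing exactly on $\{s=0\}$, and use this to force any control set to collapse to a single equilibrium. The difference is in the choice of $G$. The paper first applies a diffeomorphism $f(s,t)=\bigl(s,\,t-\tfrac{2c}{b}s\bigr)$ to eliminate the $c$-term, and then on the conjugated system uses $G(s,t)=6\sigma t-6\tfrac{a}{b}\sigma s^{2}+2s^{3}$, where $\sigma$ is chosen (via compactness of $\Omega$) so that $\sigma\alpha+\omega b>0$ for all $\omega\in\Omega$; this gives $\tfrac{d}{d\tau}G=6(\sigma\alpha+\omega b)\,s^{2}$, which is nonnegative but $\omega$-dependent. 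Your choice $G(s,t)=t-\tfrac{c}{b}s-\tfrac{a}{2b}s^{2}$ works directly on the original system and yields the cleaner, control-independent derivative $\tfrac{d}{d\tau}G=\alpha s^{2}$; no preliminary conjugation, no auxiliary parameter $\sigma$, and no cubic term are needed. Your final step---extracting from property~1 that a trajectory confined to a level set of $G$ must have $s\equiv 0$, hence $\omega\equiv 0$, hence be a fixed point---is spelled out more explicitly than in the paper and is correct as written.
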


\begin{proof} The diffeomorphism 
$$f:\R^2\rightarrow\R^2, \hspace{1cm}f(s, t)=\left(s, t-\frac{2c}{b}s\right),$$
conjugates the previous LCS to the system 
   \begin{flalign*}
	 	&&\left(\Sigma_{1, 0}\right):\left\{
	 	\begin{array}{l}
	 		\dot{s}=\omega b\\
	 		\dot{t}=\alpha s^2+2a\omega s
	 	\end{array}\right.
   &&
	 \end{flalign*}	  

For this system, let us define the function 
$$G:\R^2\rightarrow\R, \hspace{1cm} G(s, t)=6\sigma t-6\frac{a}{b} \sigma s^2+2s^3,$$
where $\sigma\in\R$ satisfies $\sigma\alpha+\omega b>0$ for all $\omega\in \Omega$. Then, for any $\omega\in\Omega$ the function
$$\tau\mapsto G(\varphi(\tau, \mathbf{v}_0,\omega)), \hspace{1cm}\mathbf{v}_0=(s_0, t_0),$$
satisfies
$$\hspace{-2cm}\frac{d}{d\tau}G(\varphi(\tau, \mathbf{v}_0,\omega))=6\sigma\dot{t}-12\frac{a}{b} \sigma s\dot{s}+6s^2\dot{s}$$
$$\hspace{2cm}=6\left(\sigma(\alpha s^2+2a\omega s)-2\frac{a}{b}\sigma\omega b s+ \omega b s^2\right)=6(\sigma\alpha+\omega b)s^2,$$
which, as in Theorem \ref{equilibria} allows us to conclude that the only control sets are the singularities of the drift, that is,  $\{(0, t)\}, t\in\R$ are the control sets of $\Sigma_{1, 0}$.
\end{proof}


\begin{thebibliography}{9}

\bibitem{Ayala} Ayala V, Tirao J. Linear Control Systems on Lie Groups and Controllability. Proceedings of Symposia in Pure Mathematics, 1999;64.

\bibitem{VAAA} Ayala V, Da Silva A, Torreblanca M. Linear control systems on the homogeneous spaces of the 2D Lie group. J Differ Equ. 2022;314:850-870.

\bibitem{VA1} Ayala V, Da Silva A. The control set of a linear control system on the two dimensional Lie group. J Differ Equ. 2020;268(11):6683-6701.

\bibitem{VAP} Ayala V, Da Silva A, Jouan P, Zsigmond G. Control sets of linear systems on semi-simple Lie groups. J Differ Equ. 2020;269(5):449-466.

\bibitem{VA2} Ayala V, Da Silva A. On the characterization of the controllability property for linear control systems on nonnilpotent, solvable three-dimensional Lie groups. J Differ Equ. 2019;266(12):8233-8257.

\bibitem{VA3} Ayala V, Da Silva A. Controllability of linear control systems on Lie groups with semisimple finite center. SIAM J Control Optim. 2017;55(2):1332-1343.

\bibitem{AEO} Da Silva, A., Kizil, E., Duman, O. Linear Control Systems on Homogeneous Spaces of the Heisenberg Group. J Dyn Control Syst 29, 2065–2086 (2023)

\bibitem{FW} Colonius F, Kliemann W. The Dynamics of Control. Birkhäuser: Boston; 2000.

\bibitem{Jouan} Jouan P. Equivalence of control systems with linear systems on Lie groups and homogeneous spaces. ESAIM: Control Optim Calc Var. 2010;16:956-973.

\bibitem{Sussmann} Jurdjevic V, Sussmann HJ. Control Systems on Lie Groups. J Differ Equ. 1972;12:313-329.

\bibitem{Markus} Markus L. Controllability of multi-trajectories on Lie groups, in: Proceedings of Dynamical Systems and Turbulence, Warwick, in: Lecture Notes in Mathematics. 1980;898:250-265.

\bibitem{GL} Leitmann G, Optimization Techniques with Application to Aerospace Systems, Academic Press Inc., 1962.
\bibitem{KS} Shell K, Applications of Pontryagin’s Maximum Principle to Economics, Mathematical Systems Theory and Economics I and II, Lecture Notes in Operations Research and Mathematical Economics, v. 11 (1968) 241-292.

\end{thebibliography}
\end{document}